\DeclareMathOperator{\Con}{Con}
\DeclareMathOperator{\Fil}{Fil}
\newtheorem{theorem}{Theorem}[section]
\newtheorem{definition}[theorem]{Definition}
\newtheorem{lemma}[theorem]{Lemma}
\newtheorem{proposition}[theorem]{Proposition}
\newtheorem{remark}[theorem]{Remark}
\newtheorem{example}[theorem]{Example}
\newtheorem{corollary}[theorem]{Corollary}
\title{Filters and congruences in sectionally pseudocomplemented lattices and posets}
\author{Ivan~Chajda and Helmut~L\"anger}
\date{}
\begin{document}
\footnotetext{Support of the research of the authors by the Austrian Science Fund (FWF), project I~4579-N, and the Czech Science Foundation (GA\v CR), project 20-09869L, entitled ``The many facets of orthomodularity'', as well as by \"OAD, project CZ~02/2019, entitled ``Function algebras and ordered structures related to logic and data fusion'', and, concerning the first author, by IGA, project P\v rF~2020~014, is gratefully acknowledged.}
\maketitle
\begin{abstract}
In our previous papers, together with J.~Paseka we introduced so-called sectionally pseudocomplemented lattices and posets and illuminated their role in algebraic constructions. We believe that -- similar to relatively pseudocomplemented lattices -- these structures can serve as an algebraic semantics of certain intuitionistic logics. The aim of the present paper is to define congruences and filters in these structures, derive mutual relationships between them and describe basic properties of congruences in strongly sectionally pseudocomplemented posets. For the description of filters both in sectionally pseudocomplemented lattices and posets, we use the tools introduced by A.~Ursini, i.e.\ ideal terms and the closedness with respect to them. It seems to be of some interest that a similar machinery can be applied also for strongly sectionally pseudocomplemented posets in spite of the fact that the corresponding ideal terms are not everywhere defined. 
\end{abstract}

{\bf AMS Subject Classification:} 06A11, 06D15, 06D20, 08B05, 08A30

{\bf Keywords:} Sectionally pseudocomplemented lattice, sectionally pseudocomplemented poset, filter, congruence, weak regularity, congruence permutability, Maltsev term, ideal term, closedness of a subset, congruence class, deductive system, partial term
\section{Introduction}

The concept of a relative pseudocomplemented lattice was introduced by R.~P.~Dilworth (\cite D). It was used in several branches of mathematics, e.g.\ as an algebraic axiomatization of intuitionistic logic (by Heyting and Brouwer) where the relative pseudocomplement is interpreted as the logical connective implication.

However, every relative pseudocomplemented lattice is distributive, see e.g.\ \cite B and \cite L. Because  not every non-classical propositional calculus is necessarily distributive (for instance, the logic of quantum mechanics), it was a question if the concept of relative pseudocomplementation can be extended in a reasonable way to non-distributive lattices. This was realized by the first author in \cite C by introducing sectional pseudocomplementation. Later on, the concept of sectional pseudocomplementation was extended also to posets, see \cite{CLP}.

In the present paper we focus on congruences and filters on sectionally pseudocomplemented lattices and posets. For lattices we can use the machinery of universal algebra (see e.g.\ \cite{CEL}) because sectionally pseudocomplemented lattices form a variety which is congruence permutable, congruence distributive and weakly regular. The situation with sectionally pseudocomplemented posets is a bit more complicated due to the fact that such a poset in general cannot be extended to a sectionally pseudocomplemented lattice by means of the Dedekind-MacNeille completion, see \cite{CLP} for the result.

\section{Sectionally pseudocomplemented lattices}

Recall that a {\em lattice} $(L,\vee,\wedge)$ is said to be {\em sectionally pseudocomplemented} if for all $a,b\in L$ there exists the pseudocomplement of $a\vee b$ in the interval $([b),\leq)$, i.e.\ the greatest element $c$ of $L$ satisfying
\[
(a\vee b)\wedge c=b.
\]
In this case $c$ is called the {\em sectional pseudocomplement of $a$ with respect to $b$} and it will be denoted by $a*b$. We consider sectionally pseudocomplemented lattices as algebras $(L,\vee,\wedge,*)$ of type $(2,2,2)$. Every non-empty sectionally pseudocomplemented lattice has a greatest element $1$, namely the algebraic constant $x*x$. In the following we consider only non-empty lattices.

An example of a sectionally pseudocomplemented lattice that is not relatively pseudocomplemented is $\mathbf N_5$ depicted in Figure~1:

\vspace*{-2mm}

\begin{center}
\setlength{\unitlength}{7mm}
\begin{picture}(4,8)
\put(2,1){\circle*{.3}}
\put(3,3){\circle*{.3}}
\put(1,4){\circle*{.3}}
\put(3,5){\circle*{.3}}
\put(2,7){\circle*{.3}}
\put(2,1){\line(-1,3)1}
\put(2,1){\line(1,2)1}
\put(2,7){\line(-1,-3)1}
\put(2,7){\line(1,-2)1}
\put(3,3){\line(0,1)2}
\put(1.85,.25){$0$}
\put(3.4,2.85){$a$}
\put(3.4,4.85){$c$}
\put(.3,3.85){$b$}
\put(1.85,7.4){$1$}
\put(1.2,-.75){{\rm Fig.\ 1}}
\end{picture}
\end{center}

\vspace*{4mm}

This lattice is not distributive and hence not relatively pseudocomplemented (see \cite B). The operation table for the sectional pseudocomplementation is as follows:
\[
\begin{array}{c|ccccc}
* & 0 & a & b & c & 1 \\
\hline
0 & 1 & 1 & 1 & 1 & 1 \\
a & b & 1 & b & 1 & 1 \\
b & c & a & 1 & c & 1 \\
c & b & a & b & 1 & 1 \\
1 & 0 & a & b & c & 1
\end{array}
\]
Recall from \cite{CLP}, Theorems~2.5 and 2.6, the following important result.

\begin{proposition}\label{prop1}
The class of sectionally pseudocomplemented lattices $(L,\vee,\wedge,*)$ forms a variety which besides the lattice axioms is determined by the following identities:
\begin{align*}
             z\vee y & \leq x*((x\vee y)\wedge(z\vee y)), \\
(x\vee y)\wedge(x*y) & \approx y.
\end{align*}
This variety is congruence permutable, congruence distributive and weakly regular. A Maltsev term for congruence permutability is given by
\[
p(x,y,z):=((x*y)*z)\wedge((z*y)*x).
\]
\end{proposition}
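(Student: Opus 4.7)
The plan is to split the statement into four parts: the identities define the class (making it a variety), congruence distributivity, the Maltsev term, and weak regularity.

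For the axiomatization, I would first verify that any sectionally pseudocomplemented lattice satisfies both identities. The second is immediate from the defining equation of $x*y$. For the first, set $w := z\vee y$ and $v := (x\vee y)\wedge w$; then $y\leq v\leq x\vee y$ forces $x\vee v = x\vee y$, so $(x\vee v)\wedge w = v$, and maximality of $x*v$ yields $w\leq x*v$. Conversely, assuming both identities, instantiating $z := y$ in the first gives $y\leq x*y$, while the second gives $(x\vee y)\wedge(x*y) = y$. For the maximality clause, any $c$ with $(x\vee y)\wedge c = y$ satisfies $c\geq y$, so setting $z := c$ in the first identity yields $c = c\vee y \leq x*((x\vee y)\wedge c) = x*y$. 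Thus $x*y$ is the sectional pseudocomplement, and the class is axiomatized by the two identities (plus lattice axioms), hence forms a variety. Congruence distributivity is then free: every congruence of $(L,\vee,\wedge,*)$ is in particular a lattice congruence, so $\BCon(L,\vee,\wedge,*)$ is a sublattice of the distributive congruence lattice of the lattice reduct.

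For the Maltsev term, I would first note $x*x = 1$ (the top element, being the greatest $c$ with $x\wedge c = x$) and $1*y = y$. From the previous paragraph, $y\leq x*y$ for all $x,y$, which after renaming gives $x\leq y*x$, i.e.\ $(y*x)\vee x = y*x$. The key auxiliary claim is $y\leq(y*x)*x$; to see this, substitute $x\to y*x$, $y\to x$, $z\to y$ in the first identity and apply the second identity $(y*x)\wedge(y\vee x) = x$: the first identity collapses to $y\vee x\leq(y*x)*x$. Then
\[
p(x,x,y) = (1*y)\wedge((y*x)*x) = y\wedge((y*x)*x) = y,
\]
and the symmetry of $p$ in its first and third arguments gives $p(x,y,y) = x$.

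For weak regularity I would exhibit the two binary terms $t_1(x,y) := x*y$ and $t_2(x,y) := y*x$: if $x = y$ then both equal $x*x = 1$, and conversely, $x*y = 1$ combined with the second identity gives $y = (x\vee y)\wedge 1 = x\vee y$, hence $x\leq y$, and symmetrically $y*x = 1$ yields $y\leq x$. I expect the main obstacle to be the reverse direction of the axiomatization: encoding the maximality clause by an equation whose parameter $z\vee y$ ranges over exactly the elements above $y$ is the non-obvious move, after which the rest is essentially routine manipulation of the two identities.
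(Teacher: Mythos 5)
Your argument is correct. Note, however, that the paper itself gives no proof of this proposition: it is recalled verbatim from Theorems~2.5 and 2.6 of \cite{CLP}, so there is no in-paper argument to compare against. Your self-contained verification is sound and follows the natural route. The equivalence of the two identities with the definition of sectional pseudocomplementation is handled correctly in both directions, the decisive step being your observation that the parameter $z\vee y$ in the first identity ranges exactly over the elements above $y$, which encodes the maximality clause. Your auxiliary inequality $y\leq(y*x)*x$ is precisely item (iv) of Lemma~\ref{lem1}, and the Maltsev computation $p(x,x,z)=z$, $p(x,z,z)=x$ goes through. The congruence-distributivity argument (the congruence lattice of the expansion is a sublattice of the distributive congruence lattice of the lattice reduct) is the standard one. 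The only point where you lean on an unproved external fact is weak regularity: exhibiting binary terms $t_1(x,y)=x*y$, $t_2(x,y)=y*x$ with $t_1=t_2=1$ iff $x=y$ establishes weak regularity only via Cs\'ak\'any's Mal'cev-type characterization, which you should cite explicitly (it is in \cite{CEL}); alternatively, weak regularity is witnessed directly by the argument of Theorem~\ref{th1}, which reconstructs $\Theta$ from $[1]\Theta$ without invoking that characterization.
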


For the concept of congruence permutability we refer the reader to \cite{CEL}.

Weak regularity means that every congruence $\Theta$ on a sectionally pseudocomplemented lattice with greatest element $1$ is determined by its kernel, i.e.\ by the congruence class $[1]\Theta$. Hence our first task is to describe these classes. For this purpose we introduce the following concept:

\begin{definition}\label{def1}
Let $\mathbf L=(L,\vee,\wedge,*)$ be a sectionally pseudocomplemented lattice. A {\em filter} of $\mathbf L$ is a subset $F$ of $L$ containing $1$ such that $x*y,y*x\in F$ implies
\[
(x\vee z)*(y\vee z),(x\wedge z)*(y\wedge z),(x*z)*(y*z),(z*x)*(z*y)\in F.
\]
Let $\Fil\mathbf L$ denote the set of all filters of $\mathbf L$. For any subset $M$ of $L$ define a binary relation $\Phi(M)$ on $L$ as follows:
\[
\Phi(M):=\{(x,y)\in L^2\mid x*y,y*x\in M\}.
\]
\end{definition}

The following results were proved in \cite C and \cite{CLP}.

\begin{lemma}\label{lem1}
If $\mathbf L=(L,\vee,\wedge,*)$ is a sectionally pseudocomplemented lattice and $a,b,c\in L$ then
\begin{enumerate}[{\rm(i)}]
\item $a*b=1$ if and only if $a\leq b$,
\item $1*a=a$,
\item $a\leq b*a$,
\item $a\leq(a*b)*b$,
\item if $a\leq b$ then $b*c\leq a*c$,
\item $(a\vee b)\wedge(a*b)=b$.
\end{enumerate}
\end{lemma}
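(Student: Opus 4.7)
My plan is to prove the six assertions by working directly from the defining property of the sectional pseudocomplement $a*b$, namely that $a*b$ is the greatest element $c$ with $(a\vee b)\wedge c=b$, together with the two identities of Proposition~\ref{prop1}. Item (vi) is literally the second identity of Proposition~\ref{prop1}, so nothing needs to be shown there. I would prove (i)--(iii) as easy direct applications of the definition, and reserve most of the effort for (iv) and (v), which require a clever use of the first identity.

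For (i), I would argue both directions: if $a\leq b$ then $a\vee b=b$, so any $c$ satisfies $(a\vee b)\wedge c=b\wedge c$; since the greatest element $1$ gives $b\wedge 1=b$, we get $a*b=1$. Conversely, if $a*b=1$, plugging $c=1$ into $(a\vee b)\wedge c=b$ gives $a\vee b=b$, hence $a\leq b$. For (ii), with $a=1$ we need the greatest $c$ with $1\wedge c=a$, i.e.\ $c=a$. For (iii), the definition gives $(b\vee a)\wedge(b*a)=a$; since $(b\vee a)\wedge a=a$ holds trivially, $a$ lies among the elements whose meet with $b\vee a$ equals $a$, so $a\leq b*a$ by maximality.

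The key step is (iv). Here I would first note that (vi) yields $b\leq a*b$ (since $b=(a\vee b)\wedge(a*b)\leq a*b$), so $(a*b)\vee b=a*b$. Now I apply the first identity of Proposition~\ref{prop1}, $z\vee y\leq x*\bigl((x\vee y)\wedge(z\vee y)\bigr)$, with the substitution $x:=a*b$, $y:=b$, $z:=a$: the right-hand side becomes $(a*b)*\bigl((a*b)\wedge(a\vee b)\bigr)=(a*b)*b$ by (vi), while the left-hand side is $a\vee b\geq a$. This yields $a\leq(a*b)*b$. This is the part I expect to be the main obstacle, since one has to guess the right substitution into the first identity and notice that (vi) collapses the inner expression to $b$.

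For (v), I would use maximality once more. Assume $a\leq b$. I claim $(a\vee c)\wedge(b*c)=c$; then $b*c$ belongs to the set whose greatest element defines $a*c$, giving $b*c\leq a*c$. The inequality $\leq c$ follows from $a\vee c\leq b\vee c$ and (vi) applied to $b,c$, while $\geq c$ follows from $c\leq a\vee c$ together with $c\leq b*c$, which is (iii) with $a$ replaced by $c$ and $b$ replaced by $b$. Together these give the claimed equality and complete the proof.
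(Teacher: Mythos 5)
Your proposal is correct. Note that the paper itself offers no proof of this lemma --- it is quoted as known, with a pointer to \cite{C} and \cite{CLP} --- so there is no internal argument to compare against; what you have written is a valid self-contained derivation from the definition together with the two identities of Proposition~\ref{prop1}. Items (i)--(iii) and (vi) are indeed immediate from the definition of $a*b$ as the greatest $c$ with $(a\vee b)\wedge c=b$, and your maximality argument for (v) is clean: $(a\vee c)\wedge(b*c)\leq(b\vee c)\wedge(b*c)=c$ and $c\leq(a\vee c)\wedge(b*c)$ by (iii), so $b*c$ competes in the defining maximum for $a*c$. You are also right that (iv) is the only delicate point: the naive attempt to show that $a$ itself satisfies $((a*b)\vee b)\wedge a=b$ fails (in $\mathbf N_5$ one has $a*b=b$ and $a\wedge b=0\neq b$), so some extra axiom is genuinely needed, and your substitution $x:=a*b$, $y:=b$, $z:=a$ into $z\vee y\leq x*((x\vee y)\wedge(z\vee y))$, collapsing the inner expression to $b$ via (vi), does the job: it yields $a\leq a\vee b\leq(a*b)*b$. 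All steps check out.
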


Observe that (iii) implies $b\leq(a*b)*b$.

The relationship between congruences and filters in sectionally pseudocomplemented lattices is illuminated in the next two theorems.

\begin{theorem}\label{th1}
Let $\mathbf L=(L,\vee,\wedge,*)$ be a sectionally pseudocomplemented lattice and $\Theta\in\Con\mathbf L$. Then $[1]\Theta\in\Fil\mathbf L$ and for any $x,y\in L$,
\[
(x,y)\in\Theta\text{ if and only if }x*y,y*x\in[1]\Theta,
\]
i.e.\ $\Phi([1]\Theta)=\Theta$.
\end{theorem}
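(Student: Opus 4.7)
The plan is to establish the equivalence $(x,y)\in\Theta$ iff $x*y,y*x\in[1]\Theta$ first, and then deduce the filter property of $[1]\Theta$ as an easy corollary. The key observation is that the four closure conditions in Definition~\ref{def1} are precisely instances of the substitutivity of $\Theta$ under $*$, once the equivalence is already in hand. Thus the whole theorem reduces to a two-way characterization of $\Theta$ in terms of its kernel.

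For the forward direction, suppose $(x,y)\in\Theta$. Since $\Theta$ is a congruence with respect to $*$, combining $(x,y)\in\Theta$ with the reflexive pair $(y,y)\in\Theta$ gives $(x*y,y*y)\in\Theta$, and combining $(y,y)\in\Theta$ with $(y,x)\in\Theta$ gives $(y*x,y*y)\in\Theta$. By Lemma~\ref{lem1}(i) one has $y*y=1$, so both $x*y$ and $y*x$ belong to $[1]\Theta$.

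For the backward direction, assume $x*y,y*x\in[1]\Theta$, i.e.\ $(x*y,1),(y*x,1)\in\Theta$. By Lemma~\ref{lem1}(vi), $(x\vee y)\wedge(x*y)=y$; substituting $1$ for $x*y$ modulo $\Theta$ (via compatibility with $\wedge$) and using the identity $(x\vee y)\wedge 1=x\vee y$, I get $(x\vee y,y)\in\Theta$. A symmetric application of Lemma~\ref{lem1}(vi) with the roles of $x$ and $y$ swapped, using $(y\vee x)\wedge(y*x)=x$, yields $(x\vee y,x)\in\Theta$. Transitivity then gives $(x,y)\in\Theta$, as required.

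The filter property of $[1]\Theta$ now follows quickly. Clearly $1\in[1]\Theta$. If $x*y,y*x\in[1]\Theta$, then by the equivalence just proved one has $(x,y)\in\Theta$; substitutivity of $\Theta$ under the basic operations then yields $(x\vee z,y\vee z)$, $(x\wedge z,y\wedge z)$, $(x*z,y*z)$, $(z*x,z*y)\in\Theta$, and applying the forward direction of the equivalence to each of these pairs places the four elements $(x\vee z)*(y\vee z)$, $(x\wedge z)*(y\wedge z)$, $(x*z)*(y*z)$, $(z*x)*(z*y)$ into $[1]\Theta$. I expect the only nontrivial step in the whole argument to be the backward direction, where the identity from Lemma~\ref{lem1}(vi) is exactly what allows one to translate $x*y\equiv 1\pmod\Theta$ into an actual $\Theta$-relation between $x$ and $y$; without such an identity linking $x*y$ to the pair $(x,y)$ itself, the kernel would not suffice to recover $\Theta$, and weak regularity would fail.
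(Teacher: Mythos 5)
Your proof is correct, and its overall architecture (forward direction from substitutivity of $*$, backward direction via a short computation, and the filter property of $[1]\Theta$ deduced afterwards from the substitution property together with the already-established equivalence) matches the paper's. The only genuine difference is the backward direction: the paper chains
\[
a=a\wedge((a*b)*b)\mathrel\Theta(1*a)\wedge(1*b)\mathrel\Theta((b*a)*a)\wedge b=b,
\]
relying on Lemma~\ref{lem1}~(ii) and (iv), whereas you invoke Lemma~\ref{lem1}~(vi) twice to show that both $y$ and $x$ are $\Theta$-related to the explicit intermediate element $x\vee y$, and then conclude by transitivity. Both arguments are one-step substitutions of $1$ for $x*y$ (resp.\ $y*x$) inside an identity of the structure; yours is arguably slightly more transparent, since $(x\vee y)\wedge(x*y)\approx y$ is the defining identity of the sectional pseudocomplement and the mediating element $x\vee y$ is made visible, while the paper's version avoids naming an intermediate element by compressing everything into a single two-link chain through $(1*a)\wedge(1*b)$. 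Your closing remark correctly identifies Lemma~\ref{lem1}~(vi) as the substantive ingredient that makes the kernel determine the congruence; the paper's choice of (iv) plays exactly the same role there.
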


\begin{proof}
Let $a,b\in L$. If $(a,b)\in\Theta$ then $a*b,b*a\in[a*a]\Theta=[1]\Theta$, i.e.\ $(a,b)\in\Phi([1]\Theta)$.
Conversely, if $(a,b)\in\Phi([1]\Theta)$ then $a*b,b*a\in[1]\Theta$ and hence, using (ii) and (iv) of Lemma~\ref{lem1},
\[
a=a\wedge((a*b)*b)\mathrel\Theta(1*a)\wedge(1*b)\mathrel\Theta((b*a)*a)\wedge b=b,
\]
i.e.\ $(a,b)\in\Theta$. This shows $\Phi([1]\Theta)=\Theta$. Due to the substitution property of $\Theta$ with respect to $\vee$, $\wedge$ and $*$ we see that $[1]\Theta$ satisfies the conditions from Definition~\ref{def1} and hence $[1]\Theta\in\Fil\mathbf L$.
\end{proof}

Theorem~\ref{th1} witnesses that sectionally pseudocomplemented lattices are weakly regular.

We can prove also the converse.

\begin{theorem}\label{th2}
Let $\mathbf L=(L,\vee,\wedge,*)$ be a sectionally pseudocomplemented lattice and $F\in\Fil\mathbf L$. Then $\Phi(F)\in\Con\mathbf L$ and $[1](\Phi(F))=F$.
\end{theorem}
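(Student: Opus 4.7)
First I would dispose of the easy half, $[1]\Phi(F)=F$. Unravelling, $x\in[1]\Phi(F)$ means $(x,1)\in\Phi(F)$, i.e.\ $x*1,1*x\in F$. By Lemma~\ref{lem1}(i) we have $x*1=1$, which belongs to $F$ since $1\in F$, and by Lemma~\ref{lem1}(ii) we have $1*x=x$. Hence $x\in[1]\Phi(F)$ is equivalent to $x\in F$, giving the equality.

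For the congruence assertion $\Phi(F)\in\Con\mathbf L$, I would verify the four standard conditions in turn. Reflexivity is direct: $(x,x)\in\Phi(F)$ because $x*x=1\in F$ (Lemma~\ref{lem1}(i)). Symmetry is immediate from the symmetric form of the defining condition for $\Phi(F)$. Compatibility with $\vee$, $\wedge$ and $*$ is the whole point of Definition~\ref{def1}: from $(x,y)\in\Phi(F)$ the four listed expressions land in $F$, giving one-sided substitution in each operation, and applying the closure to $(y,x)\in\Phi(F)$ (which is in $\Phi(F)$ by the symmetry of the hypothesis) supplies the reverse-direction substitutions. The upshot is that $(x\vee z,y\vee z)$, $(x\wedge z,y\wedge z)$, $(x*z,y*z)$ and $(z*x,z*y)$ all lie in $\Phi(F)$ for every $z\in L$.

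The transitivity step is the main obstacle, and the only place where the specific structure of sectional pseudocomplementation is genuinely needed. Given $(a,b),(b,c)\in\Phi(F)$, I would invoke the compatibility already established, together with the identities of Lemma~\ref{lem1}, to pin down $a*c\in F$ (and, by the same token, $c*a\in F$). Concretely, applying the filter closure to $(a,b)$ at $z=c$ places $(a*c)*(b*c)$ and $(b*c)*(a*c)$ in $F$, and applying it to $(b,c)$ at $z=a$ places $(a*b)*(a*c)$ and $(a*c)*(a*b)$ in $F$. Together with $a*b,b*c\in F$ and the decomposition $(a\vee c)\wedge(a*c)=c$ from Lemma~\ref{lem1}(vi), one should be able to recover $a*c$ as a meet of elements already in $F$. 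The crux is thus to establish that $F$ is closed under finite meets; this I would extract by iterating the one-sided $\wedge$-closure clause of Definition~\ref{def1} and using Lemma~\ref{lem1}(iv) to convert $y*(x\wedge y)\in F$ back into $x\wedge y\in F$. As a cleaner alternative I would appeal to the Mal'tsev term $p(x,y,z)=((x*y)*z)\wedge((z*y)*x)$ of Proposition~\ref{prop1}: in any congruence-permutable variety, a reflexive compatible relation is automatically a congruence, so that once $\Phi(F)$ is shown to be a subuniverse of $\mathbf L\times\mathbf L$, transitivity (and hence the whole theorem) comes for free from $p(a,b,b)=a$, $p(b,b,c)=c$ applied to the pairs $(a,b),(b,b),(b,c)\in\Phi(F)$.
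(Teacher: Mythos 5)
Your proof is correct and follows essentially the paper's own route: the identity $[1](\Phi(F))=F$, reflexivity, symmetry and the substitution property are handled exactly as in the paper, and the decisive transitivity step is settled, as in the paper, by congruence permutability (Werner's theorem), for which you even spell out the componentwise Maltsev computation $p\bigl((a,b),(b,b),(b,c)\bigr)=(a,c)$ using $p(x,y,y)\approx x$ and $p(x,x,y)\approx y$. The only caveat is that your first, direct attempt at transitivity is not actually carried through and should not be relied on --- in particular, closure of $F$ under finite meets is something the paper derives only \emph{after} this theorem, as a consequence of $\Phi(F)$ being a congruence, so that route risks circularity --- but since you explicitly fall back on the Maltsev/Werner argument, the proof as a whole stands.
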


\begin{proof}
Let $a,b,c\in L$. Evidently, $\Phi(F)$ is symmetric and since $1\in F$ and $x*x\approx1$ by (i) of Lemma~\ref{lem1}, it is also reflexive. Assume $a*b,b*a\in F$. Then by Definition~\ref{def1}
\begin{align*}
& (a*c)*(b*c),(b*c)*(a*c),(c*a)*(c*b),(c*b)*(c*a), \\
& (a\vee c)*(b\vee c),(b\vee c)*(a\vee c),(a\wedge c)*(b\wedge c),(b\wedge c)*(a\wedge c)\in F
\end{align*}
whence
\[
(a*c,b*c),(c*a,c*b),(a\vee c,b\vee c),(a\wedge c,b\wedge c)\in\Phi(F).
\]
Hence $\Phi(F)$ has the substitution property with respect to all basic operations of $\mathbf L$. Since the variety of sectionally pseudocomplemented lattices is congruence permutable, $\Phi(F)$ is also transitive, see e.g.\ Werner's Theorem (\cite W) or Corollary~3.1.13 in \cite{CEL}, and hence $\Phi(F)\in\Con\mathbf L$. Finally, the following are equivalent:
\begin{align*}
      a & \in[1](\Phi(F)), \\
  (a,1) & \in\Phi(F), \\
a*1,1*a & \in F, \\
    1,a & \in F, \\
		  a & \in F
\end{align*}
and hence $[1](\Phi(F))=F$.
\end{proof}

It is elementary to check that for every sectionally pseudocomplemented lattice $\mathbf L$, \\
$(\Fil\mathbf L,\subseteq)$ is a complete lattice.

\begin{example}
The sectionally pseudocomplemnted lattice from Fig.~1 has the following filters:
\begin{align*}
     F(1) & =\{1\}, \\
F(a)=F(c) & =\{a,c,1\}, \\
F(0)=F(b) & =\{0,a,b,c,1\}.
\end{align*}
\end{example}

The following corollary follows from Theorems~\ref{th1} and \ref{th2}.

\begin{corollary}\label{cor1}
For every sectionally pseudocomplemented lattice $\mathbf L$ the mappings $\Phi\mapsto[1]\Phi$ and $F\mapsto\Phi(F)$ are mutually inverse isomorphisms between the complete lattices $(\Con\mathbf L,\subseteq)$ and $(\Fil\mathbf L,\subseteq)$.
\end{corollary}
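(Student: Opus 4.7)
The statement is essentially a bookkeeping consequence of Theorems~\ref{th1} and \ref{th2}, so the plan is to assemble the four required facts (well-definedness of both maps, mutual inverseness, and order-preservation in both directions) and then invoke the standard fact that an order-preserving bijection whose inverse is also order-preserving between complete lattices is automatically a complete lattice isomorphism.

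First I would note well-definedness: Theorem~\ref{th1} guarantees that $\Theta\mapsto[1]\Theta$ actually lands in $\Fil\mathbf L$, while Theorem~\ref{th2} guarantees that $F\mapsto\Phi(F)$ lands in $\Con\mathbf L$. Next, for mutual inverseness, the identity $\Phi([1]\Theta)=\Theta$ is explicitly part of Theorem~\ref{th1}, and $[1](\Phi(F))=F$ is the concluding equivalence chain in the proof of Theorem~\ref{th2}. Hence the two maps are mutually inverse bijections between $\Con\mathbf L$ and $\Fil\mathbf L$.

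It then remains to verify that both maps are inclusion-preserving. For $F\mapsto\Phi(F)$ this is immediate from the defining formula of $\Phi(M)$: if $F_1\subseteq F_2$ and $(x,y)\in\Phi(F_1)$, then $x*y,y*x\in F_1\subseteq F_2$, so $(x,y)\in\Phi(F_2)$. For $\Theta\mapsto[1]\Theta$ it is equally direct: if $\Theta_1\subseteq\Theta_2$ and $a\in[1]\Theta_1$, then $(a,1)\in\Theta_1\subseteq\Theta_2$, so $a\in[1]\Theta_2$.

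Once these four items are in place, the conclusion is automatic: a pair of mutually inverse monotone maps between posets is an order isomorphism, and any order isomorphism between complete lattices preserves arbitrary meets and joins, hence is a complete lattice isomorphism. I do not anticipate any genuine obstacle here, since all the substantive work -- in particular the proof that $\Phi(F)$ is transitive, which rests on congruence permutability via Werner's theorem -- has already been done inside Theorems~\ref{th1} and \ref{th2}; the corollary merely repackages them.
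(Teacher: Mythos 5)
Your proposal is correct and follows exactly the route the paper intends: the corollary is stated there without proof as an immediate consequence of Theorems~\ref{th1} and \ref{th2}, and your write-up simply supplies the routine details (well-definedness, mutual inverseness, and monotonicity of both maps, plus the standard fact that an order isomorphism between complete lattices preserves arbitrary meets and joins). No gaps.
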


Let $(L,\vee,\wedge,*)$ be a sectionally pseudocomplemented lattice. A {\em deductive system} of $\mathbf L$ is a subset $D$ of $L$ containing $1$ and satisfying the following condition:
\[
\text{If }a\in D,b\in L\text{ and }a*b\in D\text{ then }b\in D.
\]
In the following $(F*(F*a))*a$ denotes the set $\{(x*(y*a))*a\mid x,y\in F\}$. Analogously, we proceed in similar cases.

\begin{theorem}
Let $\mathbf L=(L,\vee,\wedge,*)$ be a sectionally pseudocomplemented lattice, $\Theta\in\Con\mathbf L$, $F\in\Fil\mathbf L$ and $a,b\in L$. Then
\begin{enumerate}[{\rm(i)}]
\item Every class of $\Theta$ is a convex subset of $(L,\leq)$,
\item $F$ is a deductive system of $\mathbf L$,
\item $F$ is a lattice filter of $\mathbf L$,
\item $a*(F\wedge a)\subseteq F$ and $(F*(F*a))*a\subseteq F$.
\end{enumerate}
\end{theorem}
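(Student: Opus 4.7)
The plan is to exploit Corollary~\ref{cor1} throughout: for a filter $F$ we work with the associated congruence $\Theta:=\Phi(F)$, which satisfies $[1]\Theta=F$, so that membership in $F$ becomes ``congruent to $1$ modulo $\Theta$'', and substitution in $\Theta$ carries the argument. Only part~(i) is a genuinely new piece of reasoning about congruences; parts~(ii)--(iv) follow a common template.

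For (i), I would take $a,b\in[c]\Theta$ with $a\leq d\leq b$ and apply the substitution property of $\Theta$ with respect to $\wedge$: from $a\mathrel\Theta b$ and $d\mathrel\Theta d$ we get $a\wedge d\mathrel\Theta b\wedge d$, i.e.\ $a\mathrel\Theta d$, and hence $d\in[a]\Theta=[c]\Theta$. That is the whole content of the step.

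For (ii), given $a,a*b\in F$ I would translate: $a\mathrel\Theta 1$ and $a*b\mathrel\Theta 1$. Substitution with (ii) of Lemma~\ref{lem1} gives $b=1*b\mathrel\Theta a*b\mathrel\Theta 1$, so $b\in F$. Part (iii) then splits into two easy pieces: upward closure reduces to (ii) via Lemma~\ref{lem1}(i) (if $a\leq b$ and $a\in F$ then $a*b=1\in F$, hence $b\in F$), while closure under $\wedge$ is substitution: $a,b\in F$ means $a\mathrel\Theta 1\mathrel\Theta b$, so $a\wedge b\mathrel\Theta 1\wedge 1=1$. For (iv) the same principle applies: for any $f\in F$ one has $f\mathrel\Theta 1$, so $f\wedge a\mathrel\Theta a$ and therefore $a*(f\wedge a)\mathrel\Theta a*a=1$, giving the first inclusion; for the second, with $f,g\in F$ iterate the substitution $g*a\mathrel\Theta 1*a=a$, then $f*(g*a)\mathrel\Theta 1*a=a$, and finally $(f*(g*a))*a\mathrel\Theta a*a=1$.

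No single step looks like a serious obstacle; the main thing to keep straight is that $\Phi(F)$ is already known to be a congruence by Theorem~\ref{th2}, so substitution with respect to $\vee$, $\wedge$ and $*$ is legal. The only mildly delicate point is to remember to use Lemma~\ref{lem1}(ii) ($1*x\approx x$) whenever one needs to strip a leading $1*$ after substituting $1$ for an element of $F$; everything else is bookkeeping.
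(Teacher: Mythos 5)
Your proposal is correct and follows essentially the same strategy as the paper: identify $F$ with $[1]\Phi(F)$ via Theorem~\ref{th2} and push everything through the substitution property of the congruence. The only (immaterial) differences are that in (i) you use the meet where the paper uses the dual join computation, and in (iii) you derive upward closure from (ii) rather than computing $a\vee b\in[1\vee b](\Phi(F))$ directly.
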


\begin{proof}
\
\begin{enumerate}[(i)]
\item If $c,d\in[a]\Theta$ and $c\leq b\leq d$ then
\[
b=c\vee b\in[d\vee b]\Theta=[d]\Theta=[a]\Theta.
\]
\item If $a,a*b\in F$ then
\[
b=1*b\in[a*b](\Phi(F))=[1](\Phi(F))=F.
\]
\item If $a\in F$ then
\[
a\vee b\in[1\vee b](\Phi(F))=[1](\Phi(F))=F.
\]
Moreover, if $a,b\in F$ then
\[
a\wedge b\in[1\wedge1](\Phi(F))=[1](\Phi(F))=F.
\]
\item
\begin{align*}
a*(F\wedge a) & \subseteq[a*(1\wedge a)](\Phi(F))=[a*a](\Phi(F))=[1](\Phi(F))=F, \\
  (F*(F*a))*a & \subseteq[(1*(1*a))*a](\Phi(F))=[(1*a)*a](\Phi(F))=[a*a](\Phi(F))= \\
	            & =[1](\Phi(F))=F.
\end{align*}
\end{enumerate}
\end{proof}

\section{Sectionally pseudocomplemented posets}

Now we turn our attention to sectionally pseudocomplemented posets.

\begin{definition}
Let $\mathbf P=(P,\leq)$ be a poset. Then $\mathbf P$ is called {\em sectionally pseudocomplemented} if for all $a,b\in P$ there exists a greatest element $c$ of $P$ satisfying
\[
L(U(a,b),c)=L(b).
\]
This element $c$ is called the {\em sectional pseudocomplement $a*b$ of $a$ with respect to $b$}. We write sectionally pseudocomplemented posets in the form $(P,\leq,*)$. A {\em strongly sectionally pseudocomplemented poset} is an ordered quadruple $(P,\leq,*,1)$ such that $(P,\leq,*)$ is a sectionally pseudocomplemented poset with greatest element $1$ satisfying the identity
\[
x\leq(x*y)*y.
\]
\end{definition}

The following results were proved in \cite{CLP}.

\begin{lemma}\label{lem4}
If $\mathbf P=(P,\leq,*)$ is a sectionally pseudocomplemented poset with greatest element $1$ and $a,b,c\in P$ then
\begin{enumerate}[{\rm(i)}]
\item $a*b=1$ if and only if $a\leq b$,
\item $1*a=a$,
\item $a\leq b*a$,
\item if $b\leq a$ then $a\leq(a*b)*b$,
\item if $a\leq b$ then $b*c\leq a*c$,
\item $L(U(a,b),a*b)=L(b)$.
\end{enumerate}
\end{lemma}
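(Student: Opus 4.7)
The whole lemma follows from direct manipulation of the defining property of $a*b$, namely that $L(U(a,b),a*b)=L(b)$ and $a*b$ is the greatest element with this property. Item (vi) is simply a restatement of the definition, so nothing is to be shown. For (ii) I would use $U(1,a)=\{1\}$ to rewrite the defining equation as $L(c)=L(a)$, which in a poset forces $c=a$ by antisymmetry. For (iii), the element $a$ lies in $L(a)=L(U(b,a),b*a)$; being a lower bound of $U(b,a)\cup\{b*a\}$ it is, in particular, $\leq b*a$. For (i), if $a\leq b$ then $U(a,b)=U(b)$, so the defining equation reduces to $L(b)\cap L(c)=L(b)$, i.e.\ $b\leq c$, whose largest solution is $c=1$; conversely, $a*b=1$ gives $L(b)=L(U(a,b),1)=L(U(a,b))$, and since $a\in L(U(a,b))$ we conclude $a\leq b$.

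For (v), assuming $a\leq b$, I would verify that $b*c$ itself satisfies the defining equation of $a*c$, so that by maximality $b*c\leq a*c$. Concretely, one has to check $L(U(a,c),b*c)=L(c)$: the ``$\supseteq$'' direction uses $c\leq b*c$ from (iii), while ``$\subseteq$'' exploits the inclusion $U(b,c)\subseteq U(a,c)$ coming from $a\leq b$ to reduce the claim to the defining property $L(U(b,c),b*c)=L(c)$ of $b*c$.

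The main obstacle is (iv), which is the only item that uses the hypothesis $b\leq a$ in an essential way. Here I would again try to verify that $a$ itself satisfies the defining equation of $(a*b)*b$ and then invoke maximality. Two consequences of $b\leq a$ are needed. First, $U(a,b)=U(a)$, which rewrites the defining property of $a*b$ as
\[
L(a)\cap L(a*b)=L(U(a,b),a*b)=L(b).
\]
Second, (iii) (with letters swapped) gives $b\leq a*b$, so $U(a*b,b)=U(a*b)$ and hence $L(U(a*b,b))=L(a*b)$. Combining these,
\[
L(U(a*b,b),a)=L(a*b)\cap L(a)=L(b),
\]
so $a$ is a valid candidate for the defining property of $(a*b)*b$, and by the maximality clause we obtain $a\leq(a*b)*b$.
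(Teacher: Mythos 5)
Your proof is correct. The paper itself does not prove Lemma~\ref{lem4} but defers to the reference \cite{CLP}, so there is no in-paper argument to compare against; your verification is the natural direct one from the defining property $L(U(a,b),a*b)=L(b)$ together with the maximality of $a*b$. All the individual steps check out: the reductions $U(1,a)=\{1\}$, $U(a,b)=U(b)$ for $a\leq b$, and $L(U(x))=L(x)$ are used correctly, the inclusion $U(b,c)\subseteq U(a,c)$ for $a\leq b$ gives exactly what is needed in (v), and in (iv) the two consequences of $b\leq a$ (namely $L(a)\cap L(a*b)=L(b)$ and $U(a*b,b)=U(a*b)$) combine to show that $a$ witnesses the defining equation of $(a*b)*b$, so maximality yields $a\leq(a*b)*b$.
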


Observe that (iii) implies $b\leq(a*b)*b$. Hence in case $a\leq b$ we have $a\leq(a*b)*b$.

It is easy to see that every sectionally pseudocomplemented lattice is a strongly sectionally pseudocomplemented poset, and a lattice is sectionally pseudocomplemented if and only if it is sectionally pseudocomplemented as a poset.

\begin{remark}\label{rem1}
If $(P,\leq,*)$ is a sectionally pseudocomplemented poset and $a,b\in P$ then
\[
L(U(a,b),a*b)=L(b)
\]
which shows that there exists the infimum $U(a,b)\wedge(a*b)$ and hence the previous is equivalent to
\[
U(a,b)\wedge(a*b)=b.
\]
Thus, in case $a\geq b$ we obtain $a\wedge(a*b)=b$.
\end{remark}

An example of a strongly sectionally pseudocomplemented poset which is not a lattice is visualized in Figure~2.

\vspace*{-6mm}

\begin{center}
\setlength{\unitlength}{7mm}
\begin{picture}(4,10)
\put(2,1){\circle*{.3}}
\put(1,3){\circle*{.3}}
\put(3,3){\circle*{.3}}
\put(1,5){\circle*{.3}}
\put(1,7){\circle*{.3}}
\put(3,7){\circle*{.3}}
\put(2,9){\circle*{.3}}
\put(2,1){\line(-1,2)1}
\put(2,1){\line(1,2)1}
\put(1,3){\line(0,1)4}
\put(3,3){\line(-1,2)2}
\put(3,3){\line(0,1)4}
\put(1,5){\line(1,1)2}
\put(2,9){\line(-1,-2)1}
\put(2,9){\line(1,-2)1}
\put(1.85,.25){$0$}
\put(.3,2.85){$a$}
\put(3.4,2.85){$b$}
\put(.3,4.85){$c$}
\put(.3,6.85){$d$}
\put(3.4,6.85){$e$}
\put(1.85,9.4){$1$}
\put(1.2,-.75){{\rm Fig.\ 2}}
\end{picture}
\end{center}

\vspace*{4mm}

The operation table of $*$ is as follows:
\[
\begin{array}{c|ccccccc}
* & 0 & a & b & c & d & e & 1 \\
\hline
0 & 1 & 1 & 1 & 1 & 1 & 1 & 1 \\
a & b & 1 & b & 1 & 1 & 1 & 1 \\
b & c & a & 1 & c & 1 & 1 & 1 \\
c & b & a & b & 1 & 1 & 1 & 1 \\
d & 0 & a & b & c & 1 & e & 1 \\
e & 0 & a & b & c & d & 1 & 1 \\
1 & 0 & a & b & c & d & e & 1
\end{array}
\]
This poset is not relatively pseudocomplemented since the relative pseudocomplement of $c$ with respect to $a$ does not exist.

It should be noted that there are sectionally pseudocomplemented posets which are not strongly sectionally pseudocomplemented, see e.g.\ \cite{CLP}, but these are rather curious.

Since a sectionally pseudocomplemented poset $\mathbf P$ has only one operation, namely $*$, a congruence on $\mathbf P$ should satisfy the substitution property with respect to $*$. However, this condition is rather weak and we cannot expect to obtain a natural relationship between congruences and congruence kernels similar to that obtained for sectionally pseudocomplemented lattices in the previous section. Namely, our concept of a congruence on a strongly sectionally pseudocomplemented poset should respect also some aspects of the partial order relation. This is the reason why we introduce the following property.

\begin{definition}
A {\em binary relation} $\rho$ on a poset is called {\em $\min$-stable} if the following holds: If $(a,b),(c,d)\in\rho$, $a$ is comparable with $c$ and $b$ is comparable with $d$ then
\[
(\min(a,c),\min(b,d))\in\rho.
\]
\end{definition}

Observe that this condition trivially holds if $a\leq c$ and $b\leq d$ or if $a\geq c$ and $b\geq d$.

Now we can define

\begin{definition}
Let $\mathbf P=(P,\leq,*)$ be a sectionally pseudocomplemented poset. A {\em congruence} on $\mathbf P$ is a $\min$-stable congruence on the algebraic reduct $(P,*,1)$ of $\mathbf P$. Let $\Con\mathbf P$ denote the set of all congruences on $\mathbf P$.
\end{definition}

Note that the congruences on a sectionally pseudocomplemented lattice $\mathbf L$ may not coincide with the congruences on $\mathbf L$ if it is considered only as a sectionally pseudocomplemented poset.

In analogy to the lattice case we define

\begin{definition}\label{def2}
Let $\mathbf P=(P,\leq,*,1)$ be a sectionally pseudocomplemented poset with greatest element $1$. A {\em filter} of $\mathbf P$ is a subset $F$ of $P$ containing $1$ and satisfying the following conditions for all $x,y,z,v\in P$:
\begin{itemize}
\item If $x*y,y*x\in F$ then $(x*z)*(y*z),(z*x)*(z*y)\in F$,
\item if $x*y,y*x,z*v,v*z\in F$, $x$ and $z$ are comparable and $y$ and $v$ are comparable then $\min(x,z)*\min(y,v)\in F$.
\end{itemize}
Let $\Fil\mathbf P$ denote the set of all filters of $\mathbf P$. It is elementary to check that for every strongly sectionally pseudocomplemented poset $\mathbf P$, $(\Con\mathbf P,\subseteq)$ and $(\Fil\mathbf P,\subseteq)$ are complete lattices. For any subset $M$ of $P$ put
\[
\Phi(M):=\{(x,y)\in P^2\mid x*y,y*x\in M\}.
\]
\end{definition}

The relationship between congruences and filters in strongly sectionally pseudocomplemented posets is illuminated in the next two theorems.

\begin{theorem}\label{th3}
Let $\mathbf P=(P,\leq,*,1)$ be a strongly sectionally pseudocomplemented poset and $\Theta\in\Con\mathbf L$. Then $[1]\Theta\in\Fil\mathbf L$ and for any $x,y\in P$,
\[
(x,y)\in\Theta\text{ if and only if }x*y,y*x\in[1]\Theta,
\]
i.e.\ $\Phi([1]\Theta)=\Theta$.
\end{theorem}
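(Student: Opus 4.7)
The plan is to mirror Theorem~\ref{th1}, with min-stability playing the role that substitution with respect to $\wedge$ played in the lattice case. The forward implication of the equivalence is immediate: if $(a,b)\in\Theta$, then since $\Theta$ has the substitution property with respect to $*$ and $a*a=b*b=1$ by Lemma~\ref{lem4}(i), we get $a*b\mathrel\Theta b*b=1$ and $b*a\mathrel\Theta a*a=1$, so $a*b,b*a\in[1]\Theta$.

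For the converse, suppose $a*b,b*a\in[1]\Theta$. Substitution of $\Theta$ with respect to $*$, combined with Lemma~\ref{lem4}(ii), yields
\[
((a*b)*b,b)\in\Theta\text{ and }(a,(b*a)*a)\in\Theta.
\]
The strongly sectionally pseudocomplemented hypothesis $x\leq(x*y)*y$ guarantees $a\leq(a*b)*b$ and $b\leq(b*a)*a$, so the first components of these two pairs are comparable, and so are their second components. Min-stability then delivers
\[
(a,b)=(\min(a,(a*b)*b),\min((b*a)*a,b))\in\Theta,
\]
which establishes $\Phi([1]\Theta)=\Theta$.

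To show $[1]\Theta\in\Fil\mathbf P$, the first clause of Definition~\ref{def2} follows from the just-proved equivalence together with substitution: from $x*y,y*x\in[1]\Theta$ we infer $(x,y)\in\Theta$, hence $(x*z,y*z),(z*x,z*y)\in\Theta$, and a further application of substitution sends $(x*z)*(y*z)$ and $(z*x)*(z*y)$ into $[1]\Theta$. For the second clause, assuming all four given elements lie in $[1]\Theta$ and the indicated comparabilities hold, the equivalence gives $(x,y),(z,v)\in\Theta$; min-stability yields $(\min(x,z),\min(y,v))\in\Theta$, whence $\min(x,z)*\min(y,v)\in[1]\Theta$ by the forward direction.

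The main obstacle, relative to the lattice case, is that $a$ and $b$ themselves may not be comparable, so one cannot simply replicate the chain $a=a\wedge((a*b)*b)\mathrel\Theta a\wedge b\mathrel\Theta b\wedge((b*a)*a)=b$ used in Theorem~\ref{th1}. The key trick is to choose the two pairs in $\Theta$ so that coordinate-wise comparability is forced by the identity $x\leq(x*y)*y$; this is precisely where the strongly sectionally pseudocomplemented hypothesis becomes indispensable.
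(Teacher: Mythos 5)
Your proof is correct and follows essentially the same route as the paper: the same substitution arguments give the pairs $(a,(b*a)*a)$ and $((a*b)*b,b)$ in $\Theta$, and the identity $x\leq(x*y)*y$ together with $\min$-stability yields $(a,b)\in\Theta$ exactly as in the paper's proof. Your verification that $[1]\Theta$ is a filter simply spells out the details the paper leaves to the reader.
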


\begin{proof}
Let $a,b\in L$. If $(a,b)\in\Theta$ then, by Lemma~\ref{lem4}, $a*b,b*a\in[a*a]\Theta=[1]\Theta$, i.e.\ $(a,b)\in\Phi([1]\Theta)$.
Conversely, if $(a,b)\in\Phi([1]\Theta)$ then $a*b,b*a\in[1]\Theta$ and hence, using again Lemma~\ref{lem4},
\begin{align*}
(a,(b*a)*a) & =(1*a,(b*a)*a)\in\Theta, \\
((a*b)*b,b) & =((a*b)*b,1*b)\in\Theta.
\end{align*}
Since $\mathbf P$ is strongly sectionally pseudocomplemented we have $a\leq(a*b)*b$ and $(b*a)*a\geq b$, thus by $\min$-stability of $\Theta$ we conclude
\[
(a,b)=(\min(a,(a*b)*b),\min((b*a)*a,b))\in\Theta.
\]
This shows $\Phi([1]\Theta)=\Theta$. Due to the substitution property of $\Theta$ with respect to $*$ and the $\min$-stability of $\Theta$ we obtain $[1]\Theta\in\Fil\mathbf L$.
\end{proof}

We have shown that every congruence $\Theta$ on a strongly sectionally pseudocomplemented poset is fully determined by its $1$-class $[1]\Theta$. Hence we conclude

\begin{corollary}
Strongly sectionally pseudocomplemented posets are weakly regular.
\end{corollary}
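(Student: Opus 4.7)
The plan is to read the corollary off directly from Theorem~\ref{th3}. Recall that weak regularity with respect to the constant $1$ means that any congruence $\Theta$ is uniquely determined by its $1$-class $[1]\Theta$; equivalently, if $\Theta_1,\Theta_2\in\Con\mathbf P$ satisfy $[1]\Theta_1=[1]\Theta_2$, then $\Theta_1=\Theta_2$. Theorem~\ref{th3} supplies precisely the rule that reconstructs $\Theta$ from $[1]\Theta$: it asserts $\Theta=\Phi([1]\Theta)$, where $\Phi$ is the operator introduced in Definition~\ref{def2}.

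Given this, the argument I would write is a one-line deduction. Assume $\Theta_1,\Theta_2\in\Con\mathbf P$ with $[1]\Theta_1=[1]\Theta_2$. Applying Theorem~\ref{th3} to each gives
\[
\Theta_1=\Phi([1]\Theta_1)=\Phi([1]\Theta_2)=\Theta_2,
\]
since the right-hand side depends only on the (common) $1$-class. This is exactly the definition of weak regularity, so there is nothing further to verify.

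There is no genuine obstacle at the level of the corollary; all the work was already absorbed into Theorem~\ref{th3}, where the nontrivial direction $\Theta\supseteq\Phi([1]\Theta)$ required both the $\min$-stability clause in the definition of a congruence and the defining inequality $x\leq(x*y)*y$ of a strongly sectionally pseudocomplemented poset. It is worth flagging in the write-up that the "strongly" hypothesis is essential for this route: without it, the step
\[
(a,b)=(\min(a,(a*b)*b),\min((b*a)*a,b))
\]
in the proof of Theorem~\ref{th3} breaks down, and so weak regularity need not follow for general sectionally pseudocomplemented posets by the same argument.
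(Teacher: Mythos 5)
Your proof is correct and follows exactly the paper's route: the authors likewise obtain the corollary as an immediate consequence of Theorem~\ref{th3}, which shows $\Theta=\Phi([1]\Theta)$ and hence that each congruence is determined by its $1$-class. You merely spell out the one-line deduction that the paper leaves implicit, and your remark about where the ``strongly'' hypothesis enters matches the paper's use of $x\leq(x*y)*y$ and $\min$-stability in that theorem's proof.
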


We can prove also the converse.

\begin{theorem}\label{th4}
Let $\mathbf P=(P,\leq,*,1)$ be a strongly sectionally pseudocomplemented poset and $F\in\Fil\mathbf P$. Then $\Phi(F)\in\Con\mathbf P$ and $[1](\Phi(F))=F$.
\end{theorem}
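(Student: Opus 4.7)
The plan is to establish the two claims of the theorem separately, focusing on where the real work lies. The identity $[1](\Phi(F))=F$ is a routine unfolding: $a\in[1](\Phi(F))$ iff $a*1,1*a\in F$, which by $a*1=1$ and $1*a=a$ (Lemma~\ref{lem4}(ii)) collapses to $a\in F$.

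For $\Phi(F)\in\Con\mathbf P$ I would check the required properties in turn. Reflexivity is $a*a=1\in F$ (Lemma~\ref{lem4}(i)); symmetry is immediate from the symmetric form of the defining condition; and the substitution property with respect to $*$ is a direct reading of the first clause of Definition~\ref{def2}. For $\min$-stability, given $(a,b),(c,d)\in\Phi(F)$ with $a$ comparable to $c$ and $b$ comparable to $d$, the second clause of Definition~\ref{def2} applied with $(x,y,z,v)=(a,b,c,d)$ yields $\min(a,c)*\min(b,d)\in F$; applied instead with $(x,y,z,v)=(b,a,d,c)$ (the hypotheses and comparabilities being preserved under this flip) it yields $\min(b,d)*\min(a,c)\in F$, so $(\min(a,c),\min(b,d))\in\Phi(F)$.

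The main obstacle is transitivity of $\Phi(F)$. Given $(a,b),(b,c)\in\Phi(F)$, i.e.\ $a*b,b*a,b*c,c*b\in F$, the goal is $a*c,c*a\in F$. My plan is to adapt the central argument of Theorem~\ref{th3}: from $(a*b,1),(b*a,1),(b*c,1),(c*b,1)\in\Phi(F)$ and the already-established substitution property, derive the intermediate relations $((a*b)*b,b),((b*a)*a,a),((b*c)*c,c),((c*b)*b,b)\in\Phi(F)$. Strong sectional pseudocomplementation supplies the comparabilities $a\leq(a*b)*b$, $b\leq(b*a)*a$, $b\leq(b*c)*c$ and $c\leq(c*b)*b$ needed to feed $\min$-stability, and chaining these $\min$-steps should collapse $(a,b)$ and $(b,c)$ into $(a,c)\in\Phi(F)$. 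The genuine difficulty here is that, unlike in the lattice setting, Werner's theorem together with the Maltsev term of Proposition~\ref{prop1} is not at our disposal: that term uses $\wedge$, which is only a partial operation on $P$. Transitivity must therefore be extracted directly from the strong axiom $x\leq(x*y)*y$ together with $\min$-stability, which is the technical heart of the argument.
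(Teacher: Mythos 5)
Your treatment of $[1](\Phi(F))=F$, of reflexivity, symmetry and the substitution property, and of $\min$-stability coincides with the paper's argument and is fine; in particular your observation that the second clause of Definition~\ref{def2} has to be applied a second time with $(x,y,z,v)=(b,a,d,c)$ to obtain $\min(b,d)*\min(a,c)\in F$ is a detail the paper glosses over, and you correctly locate the real difficulty in transitivity, where Werner's theorem and the Maltsev term are unavailable.

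The transitivity step itself, however, has a genuine gap. The four relations you propose to derive, $((a*b)*b,b)$, $((b*a)*a,a)$, $((b*c)*c,c)$ and $((c*b)*b,b)$, each relate an $\{a,b\}$-term to an $\{a,b\}$-term or a $\{b,c\}$-term to a $\{b,c\}$-term; none of them bridges the $a$-side to the $c$-side. To finish with $(a,c)=(\min(p,r),\min(q,s))$ you need, at the last application of $\min$-stability, two pairs $(p,q),(r,s)\in\Phi(F)$ at least one of which already has an $a$-side term on the left and a $c$-side term on the right, and no combination of your four relations with $(a,b)$ and $(b,c)$ produces one: pairing $((a*b)*b,b)$ with $(b,c)$ fails because $b$ and $c$ need not be comparable, and pairing $((b*a)*a,a)$ with $((b*c)*c,c)$ fails because $(b*a)*a$ and $(b*c)*c$ need not be comparable. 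The paper instead derives the two cross relations
\[
((a*b)*b,\,c)=\bigl((a*b)*b,\,(b*b)*c\bigr)\in\Phi(F)\quad\text{and}\quad(a,\,(c*b)*b)=\bigl((b*b)*a,\,(c*b)*b\bigr)\in\Phi(F),
\]
obtained by substituting the pairs $(a,b)$ and $(b,c)$ simultaneously into the binary term $(x*b)*y$, and only then does a single application of $\min$-stability, with the comparabilities $a\leq(a*b)*b$ and $c\leq(c*b)*b$ that you already noted, yield $(a,c)=(\min((a*b)*b,a),\min(c,(c*b)*b))\in\Phi(F)$. So the missing idea is the construction of these bridging pairs (together with a justification of the simultaneous two-variable substitution from the one-variable substitution property, which is not automatic for a relation not yet known to be transitive); ``chaining $\min$-steps'' from your list will not reach $(a,c)$.
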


\begin{proof}
Let $a,b,c,d\in P$. Evidently, $\Phi(F)$ is symmetric and since $1\in F$ and $x*x\approx1$, it is also reflexive. If $(a,b)\in\Phi(F)$ then $a*b,b*a\in F$ and hence, using the properties listed in Definition~\ref{def2},
\begin{align*}
& (a*c)*(b*c),(b*c)*(a*c)\in F, \\
& (c*a)*(c*b),(c*b)*(c*a)\in F.
\end{align*}
Thus $(a*c,b*c),(c*a,c*b)\in\Phi(F)$. Hence $\Phi(F)$ has the substitution property with respect to $*$. Moreover, if $(a,b),(c,d)\in\Phi(F)$ then $a*b,b*a,c*d,d*c\in F$ and by Definition~\ref{def2}
\[
\min(a,c)*\min(b,d),\min(b,d)*\min(a,c)\in F,
\]
i.e.\ $(\min(a,c),\min(b,d))\in\Phi(F)$. This shows that $\Phi(F)$ is $\min$-stable. If $(a,b),(b,c)\in\Phi(F)$ then
\begin{align*}
      (a*b)*b & \mathrel{\Phi(F)}(b*b)*c=1*c=c, \\
a=1*a=(b*b)*a & \mathrel{\Phi(F)}(c*b)*b
\end{align*}
and hence using $\min$-stability of $\Phi(F)$
\[
(a,c)=(\min((a*b)*b,a),\min(c,(c*b)*b))\in\Phi(F),
\]
i.e.\ $\Phi(F)$ is transitive. Therefore $\Phi(F)\in\Con\mathbf P$. Finally, the following are equivalent:
\begin{align*}
      a & \in[1](\Phi(F)), \\
  (a,1) & \in\Phi(F), \\
a*1,1*a & \in F, \\
    1,a & \in F, \\
	    a & \in F.
\end{align*}
This shows $[1](\Phi(F))=F$.
\end{proof}

\begin{example}
The lattice of filters of the strongly sectionally pseudocomplemnted poset from Figure~2 consists of the following six filters:
\begin{align*}
      F(1) & =\{1\}, \\
      F(d) & =\{d,1\}, \\
      F(e) & =\{e,1\}, \\
F(\{d,e\}) & =\{d,e,1\}, \\
 F(a)=F(c) & =\{a,c,d,e,1\}, \\
 F(0)=F(b) & =\{0,a,b,c,d,e,1\}.
\end{align*}
The corresponding Hasse diagram is depicted in Figure~3:

\vspace*{-6mm}

\begin{center}
\setlength{\unitlength}{7mm}
\begin{picture}(4,10)
\put(2,1){\circle*{.3}}
\put(1,3){\circle*{.3}}
\put(3,3){\circle*{.3}}
\put(2,5){\circle*{.3}}
\put(2,7){\circle*{.3}}
\put(2,9){\circle*{.3}}
\put(2,1){\line(-1,2)1}
\put(2,1){\line(1,2)1}
\put(2,5){\line(-1,-2)1}
\put(2,5){\line(1,-2)1}
\put(2,5){\line(0,1)4}
\put(1.4,.25){$F(1)$}
\put(-.5,2.85){$F(d)$}
\put(3.3,2.85){$F(e)$}
\put(2.3,4.85){$F(\{d,e\})$}
\put(2.3,6.85){$F(a)$}
\put(1.4,9.4){$F(0)$}
\put(1.2,-.75){{\rm Fig.\ 3}}
\end{picture}
\end{center}

\vspace*{4mm}

\end{example}

The following corollary follows from Theorems~\ref{th3} and \ref{th4}.

\begin{corollary}
For every strongly sectionally pseudocomplemented poset $\mathbf P$ the mappings $\Phi\mapsto[1]\Phi$ and $F\mapsto\Phi(F)$ are mutually inverse isomorphisms between the complete lattices $(\Con\mathbf P,\subseteq)$ and $(\Fil\mathbf P,\subseteq)$.
\end{corollary}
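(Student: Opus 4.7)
The plan is to read off the corollary from Theorems~\ref{th3} and \ref{th4} together with the fact, recorded in Definition~\ref{def2}, that $(\Con\mathbf P,\subseteq)$ and $(\Fil\mathbf P,\subseteq)$ are complete lattices. So there is essentially no new content to prove; the task reduces to assembling the pieces and checking monotonicity.

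First I would verify well-definedness and bijectivity. Theorem~\ref{th3} gives that $\Theta\mapsto[1]\Theta$ maps $\Con\mathbf P$ into $\Fil\mathbf P$, and that $\Phi([1]\Theta)=\Theta$ for every $\Theta\in\Con\mathbf P$. Theorem~\ref{th4} gives that $F\mapsto\Phi(F)$ maps $\Fil\mathbf P$ into $\Con\mathbf P$, and that $[1](\Phi(F))=F$ for every $F\in\Fil\mathbf P$. These two identities say precisely that the two maps are mutually inverse bijections between $\Con\mathbf P$ and $\Fil\mathbf P$.

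Next I would check that both maps are order-preserving with respect to $\subseteq$. For $M\subseteq M'\subseteq P$ the definition $\Phi(M)=\{(x,y)\in P^2\mid x*y,y*x\in M\}$ immediately yields $\Phi(M)\subseteq\Phi(M')$. For $\Theta\subseteq\Theta'$ in $\Con\mathbf P$, if $a\in[1]\Theta$ then $(a,1)\in\Theta\subseteq\Theta'$, so $a\in[1]\Theta'$; hence $[1]\Theta\subseteq[1]\Theta'$. A pair of mutually inverse monotone maps between two posets is automatically an order isomorphism, and between complete lattices any order isomorphism is a complete lattice isomorphism (it preserves arbitrary joins and meets).

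There is really no obstacle here; the substantive work was done in Theorems~\ref{th3} and \ref{th4}, which in particular handle the $\min$-stability condition that distinguishes the poset case from the lattice case. The only mild point to be careful about is that the two statements are invoked in the strongly sectionally pseudocomplemented setting, where the identity $x\leq(x*y)*y$ was used to recover $(a,b)\in\Theta$ from $a*b,b*a\in[1]\Theta$ via $\min$-stability; this ensures that the bijection indeed lives on all of $\Con\mathbf P$ and $\Fil\mathbf P$.
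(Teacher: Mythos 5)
Your proposal is correct and matches the paper's (implicit) argument: the paper states this corollary without proof as an immediate consequence of Theorems~\ref{th3} and \ref{th4}, and you have simply written out the routine verification of mutual inverseness, monotonicity in both directions, and the fact that a mutually inverse pair of monotone maps between complete lattices is a complete lattice isomorphism. No gaps.
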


\section{Properties of filters}

Using the $\min$-stability property of congruences in strongly sectionally pseudocomplemented posets we can prove

\begin{theorem}
Let $\mathbf P=(P,\leq,*,1)$ be a strongly sectionally pseudocomplemented poset and $\Theta\in\Con\mathbf P$. Then every class of $\Theta$ is a convex subset of $(P,\leq)$.
\end{theorem}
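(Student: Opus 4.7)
The plan is to show directly that if $c,d\in[a]\Theta$ and $c\le b\le d$ then $b\in[a]\Theta$, by producing $(c,b)\in\Theta$ and then invoking transitivity. Unlike the lattice case in part (i) of the earlier theorem, I cannot use a join trick like $b=c\vee b$, because $\mathbf P$ is only a poset; the substitute for the join manipulation will be the $\min$-stability clause in the definition of a congruence on $\mathbf P$.

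The key observation is that $\Theta$ being reflexive gives $(b,b)\in\Theta$ for free, and $(c,d)\in\Theta$ by assumption. Since $c\le b\le d$, the element $c$ is comparable with $b$ and $d$ is comparable with $b$, so the hypotheses of $\min$-stability are met on the pairs $(c,d)$ and $(b,b)$. Applying $\min$-stability yields
\[
\bigl(\min(c,b),\min(d,b)\bigr)\in\Theta,
\]
and because $c\le b\le d$ this is exactly $(c,b)\in\Theta$. Combined with $(a,c)\in\Theta$, transitivity gives $b\in[a]\Theta$, establishing convexity.

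There is essentially no obstacle here beyond noticing that $\min$-stability was built into the definition of congruence in Definition~3.3 precisely to replace the lattice operations that are unavailable in the poset setting; the reflexive pair $(b,b)$ plays the role that $b\vee b$ (or the associated substitution step) plays in the lattice proof. No further appeal to the $*$-structure or to Theorems~\ref{th3}--\ref{th4} is needed, although one could equivalently rephrase everything via the filter $[1]\Theta$; the direct congruence-theoretic argument above is shorter.
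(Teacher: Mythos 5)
Your proof is correct, but it takes a genuinely different and in fact more economical route than the paper. The paper's argument (with $b\leq c\leq d$ and $b,d$ in the class) first manufactures the pair $((c*d)*b,(c*b)*b)\in\Theta$ via the substitution property for $*$, notes $(c*d)*b=1*b=b$ and invokes the strong axiom $x\leq(x*y)*y$ to get $b\leq c\leq(c*b)*b$, and only then applies $\min$-stability against the reflexive pair $(c,c)$. You instead apply $\min$-stability directly to the given pair $(c,d)\in\Theta$ together with the reflexive pair $(b,b)$; the comparability hypotheses $c\leq b$ and $b\leq d$ are exactly what is needed, and the conclusion $(\min(c,b),\min(d,b))=(c,b)\in\Theta$ finishes the proof. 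This is a legitimate ``crossing'' instance of $\min$-stability (first coordinates satisfy $c\leq b$, second coordinates satisfy $d\geq b$), so it is not one of the trivial cases noted after the definition, and it yields genuinely new information. What your approach buys is generality and brevity: it shows that every $\min$-stable equivalence relation on an arbitrary poset has convex classes, without any appeal to the operation $*$ or to the strongly sectionally pseudocomplemented identity, whereas the paper's detour through $(c*b)*b$ uses both. Both proofs are valid; yours isolates more cleanly that convexity is a consequence of $\min$-stability alone.
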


\begin{proof}
If $a,c\in P$, $b,d\in[a]\Theta$ and $b\leq c\leq d$ then
\begin{align*}
& (c*d)*b=1*b=b\leq c\leq(c*b)*b, \\
& ((c*d)*b,(c*b)*b)\in\Theta
\end{align*}
and hence by $\min$-stability of $\Theta$ we obtain
\[
(b,c)=(\min((c*d)*b,c),\min((c*b)*b,c))\in\Theta,
\]
which implies $c\in[b]\Theta=[a]\Theta$.
\end{proof}

We now investigate quotients $\mathbf P/\Theta$ of strongly sectionally pseudocomplemented posets $\mathbf P$ with respect to its congruences.

Let $\mathbf P=(P,\leq,*,1)$ be a strongly sectionally pseudocomplemented poset and $\Theta\in\Con\mathbf P$. We define a binary relation $\leq'$ on $P/\Theta$ by
\[
\text{for all }a,b\in P, [a]\Theta\leq'[b]\Theta\text{ if and only if }[a]\Theta*[b]\Theta=[1]\Theta.
\]
Recall that a {\em poset} $(P,\leq)$ is called up-directed if for any $x,y\in P$ there exists some $z\in P$ with $x,y\leq z$. Hence, every poset having a greatest element is up-directed.

It should be mentioned that the poset $(P/\Theta,\leq')$ where $\mathbf P=(P,\leq,*,1)$ denotes the strongly sectionally pseudocomplemented poset from Figure~2 and $\Theta$ the congruence on $\mathbf P$ corresponding to the filter $F(\{d,e\})$ of $\mathbf P$ is isomorphic to the lattice from Figure~1.

The following theorem was partly proved for congruences on the algebraic reduct $(P,*)$ in \cite{CLP}.

\begin{theorem}\label{th5}
Let $\mathbf P=(P,\leq,*,1)$ be a strongly sectionally pseudocomplemented poset, $n\geq1$, $a,a_1,\ldots,a_n,b\in P$ and $\Theta\in\Con\mathbf P$. Then the following hold:
\begin{enumerate}[{\rm(i)}]
\item if $a\leq b$ then $[a]\Theta\leq'[b]\Theta$,
\item $[a]\Theta\leq'[b]\Theta$ if and only if there exists some $c\in[b]\Theta$ with $a\leq c$,
\item $(P/\Theta,\leq')$ is a poset,
\item Every class of $\Theta$ is up-directed,
\item $U([a_1]\Theta,\ldots,[a_n]\Theta)=\{[x]\Theta\mid x\in U(a_1.\ldots,a_n)\}$ in $(P/\Theta,\leq')$.
\end{enumerate}
\end{theorem}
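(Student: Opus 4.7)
My plan is to prove the five parts in the listed order, since each uses the previous. The main tools will be the strong identity $x\leq(x*y)*y$, Lemma~\ref{lem4}, substitutivity of $\Theta$ with respect to $*$, and the convexity of congruence classes established in the preceding theorem. Part (i) is immediate: $a\leq b$ gives $a*b=1$ by Lemma~\ref{lem4}(i), so $[a]\Theta*[b]\Theta=[a*b]\Theta=[1]\Theta$. For part (ii), the forward direction hinges on the witness $c:=(a*b)*b$: the strong identity gives $a\leq c$, and from $(a*b,1)\in\Theta$ together with $1*b=b$ (Lemma~\ref{lem4}(ii)), substitutivity yields $(c,b)\in\Theta$. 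Conversely, if $a\leq c\in[b]\Theta$ then $a*c=1$ by Lemma~\ref{lem4}(i), and substitutivity in $*$ places $a*b$ in $[a*c]\Theta=[1]\Theta$.

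For (iii), reflexivity is just $a*a=1$, and antisymmetry follows from Theorem~\ref{th3}: if both $a*b,b*a\in[1]\Theta$ then $(a,b)\in\Theta$, hence $[a]\Theta=[b]\Theta$. Transitivity is the main subtlety. Given $(a*b,1),(b*c,1)\in\Theta$, I would combine $a\leq(a*b)*b$ with the antitonicity in Lemma~\ref{lem4}(v) to obtain $((a*b)*b)*c\leq a*c\leq 1$; substitutivity together with $1*b=b$ shows $((a*b)*b)*c\equiv b*c\equiv 1\pmod{\Theta}$; and the convexity of $[1]\Theta$ then forces $a*c\in[1]\Theta$, i.e.\ $[a]\Theta\leq'[c]\Theta$.

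Part (iv) follows by a similar witness idea: for $b,c\in[a]\Theta$ take $d:=(b*c)*c$, so that Lemma~\ref{lem4}(iii) gives $c\leq d$, the strong identity gives $b\leq d$, and since $(b,c)\in\Theta$ implies $b*c\in[c*c]\Theta=[1]\Theta$, substitutivity yields $d\equiv 1*c=c\pmod{\Theta}$, i.e.\ $d\in[a]\Theta$. Part (v) then closes off: $\supseteq$ is immediate from (i), and for $\subseteq$ one uses (ii) to pick, for each $i$, an element $c_i\in[y]\Theta$ with $a_i\leq c_i$, and then iterates (iv) inside the class $[y]\Theta$ to produce a single $x\in[y]\Theta$ with $c_i\leq x$ for all $i$; this $x$ lies in $U(a_1,\ldots,a_n)$ and satisfies $[x]\Theta=[y]\Theta$. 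The main obstacle I anticipate is the transitivity step in (iii), where one must juggle the strong identity, antitonicity of $*$ in its first argument, substitutivity, and convexity of the kernel class all at once.
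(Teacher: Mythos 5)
Your proof is correct, and parts (i), (ii), (iv) and (v) follow the paper's argument essentially verbatim: the same witnesses $(a*b)*b$ and $(b*c)*c$, and the same reduction of (v) to (ii) and (iv). The only genuine divergence is in part (iii). For antisymmetry the paper argues order-theoretically, extracting witnesses $a\leq c\in[b]\Theta$ and $c\leq d\in[a]\Theta$ from (ii) and invoking convexity of the class $[a]\Theta$ to conclude $c\in[a]\Theta$; you instead observe that $[a]\Theta\leq'[b]\Theta$ and $[b]\Theta\leq'[a]\Theta$ together say exactly $a*b,b*a\in[1]\Theta$, so Theorem~\ref{th3} gives $(a,b)\in\Theta$ outright --- shorter, and arguably the more natural argument since it uses weak regularity directly. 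For transitivity the paper again chains witnesses from (ii) ($a\leq e\in[b]\Theta$, $e\leq f\in[c]\Theta$, hence $a\leq f$ and (i) applies), whereas you squeeze $a*c$ between $((a*b)*b)*c$ and $1$ using the strong identity and Lemma~\ref{lem4}(v), show the lower bound is $\Theta$-congruent to $b*c$ and hence to $1$, and appeal to convexity of the kernel class. Both routes are sound (the convexity theorem does precede Theorem~\ref{th5} in the paper, so you may use it); the paper's witness-chaining needs convexity only for antisymmetry and nothing beyond (i) and (ii) for transitivity, while your version trades that economy for a purely computational verification that never re-enters the order structure of the quotient.
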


\begin{proof}
\
\begin{enumerate}[(i)]
\item If $a\leq b$ then $a*b=1$ whence $a*b\mathrel\Theta1$, i.e.\ $[a]\Theta*[b]\Theta=[a*b]\Theta=[1]\Theta$, thus $[a]\Theta\leq'[b]\Theta$.
\item If $[a]\Theta\leq'[b]\Theta$ then $a*b\mathrel\Theta1$ and hence $a\leq(a*b)*b\in[1*b]\Theta=[b]\Theta$. So one can put $c:=(a*b)*b$. If, conversely, there exists some $c\in[b]\Theta$ with $a\leq c$ then according to (i) we have $[a]\Theta\leq'[c]\Theta=[b]\Theta$.
\item Obviously, $\leq'$ is reflexive. Now assume $[a]\Theta\leq'[b]\Theta$ and $[b]\Theta\leq'[a]\Theta$. Then, by (ii), there exists some $c\in[b]\Theta$ with $a\leq c$. Because of $[c]\Theta=[b]\Theta\leq'[a]\Theta$ there exists some $d\in[a]\Theta$ with $c\leq d$. Since $a\leq c\leq d$, $a,d\in[a]\Theta$ and $([a]\Theta,\leq')$ is convex we conclude $c\in[a]\Theta$. Therefore $[a]\Theta=[c]\Theta=[b]\Theta$ which proves antisymmetry of $\leq'$. Finally, let $c\in P$ and assume $[a]\Theta\leq'[b]\Theta$ and $[b]\Theta\leq'[c]\Theta$. Then, by (ii) there exists some $e\in[b]\Theta$ with $a\leq e$ and because of $[e]\Theta=[b]\Theta\leq'[c]\Theta$ some $f\in[c]\Theta$ with $e\leq f$. From $a\leq e\leq f$ we have $a\leq f$ which implies $[a]\Theta\leq'[f]\Theta=[c]\Theta$ by (i), proving transitivity of $\leq'$.
\item Let $b,c\in[a]\Theta$. Then
\begin{align*}
& (b*c)*c\in[(c*c)*c]\Theta=[1*c]\Theta=[c]\Theta=[a]\Theta, \\
& b\leq(b*c)*c\text{ since }\mathbf P\text{ is strongly sectionally pseudocomplemented}, \\
& c\leq(b*c)*c\text{ according to Lemma~\ref{lem4} (iii)}.
\end{align*}
Thus $(b*c)*c$ is a common upper bound of $b$ and $c$ within $([a]\Theta,\leq)$.
\item Assume $[a]\Theta\in U([a_1]\Theta,\ldots,[a_n]\Theta)$. According to (ii), for all $i\in\{1,\ldots,n\}$ there exists some $b_i\in[a]\Theta$ with $a_i\leq b_i$. Because of (iv), $([a]\Theta,\leq')$ is up-directed and hence there exists some $c\in[a]\Theta$ with $b_1,\ldots,b_n\leq c$. This shows
\[
[a]\Theta=[c]\Theta\in\{[x]\Theta\mid x\in U(a_1,\ldots,a_n)\}.
\]
The converse inclusion follows from (i).
\end{enumerate}
\end{proof}

From (iv) we conclude that if $(P,\leq)$ satisfies the ascending chain condition (in particular, if $P$ is finite) then every class of $\Theta$ has a greatest element.

The following concept is inspired by the derivation rule Modus Ponens in the non-classical logic based on a sectionally pseudocomplemented poset where $*$ models the logical connective implication.

Let $(P,\leq,*,1)$ be a strongly sectionally pseudocomplemented poset. A {\em deductive system} of $\mathbf P$ is a subset $D$ of $P$ containing $1$ and satisfying the following condition:
\[
\text{If }a\in D,b\in P\text{ and }a*b\in D\text{ then }b\in D.
\]
We can prove the following result in analogy to the corresponding result for sectionally pseudocomplemented lattices.

\begin{theorem}\label{th7}
Let $\mathbf P=(P,\leq,*,1)$ be a strongly sectionally pseudocomplemented poset, $F\in\Fil\mathbf P$ and $c\in P$. Then
\begin{enumerate}[{\rm(i)}]
\item $F$ is a deductive system of $\mathbf P$,
\item $F$ is an order filter of $\mathbf P$,
\item $P*F\subseteq F$,
\item $c*(F\wedge c),(F*(F*c))*c\subseteq F$.
\end{enumerate}
\end{theorem}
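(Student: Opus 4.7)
The plan is to mirror the lattice-case argument, using Theorem~\ref{th4}, which provides $\Phi(F)\in\Con\mathbf P$ with $[1]\Phi(F)=F$. This turns each claim "$u\in F$" into "$(u,1)\in\Phi(F)$", so (i)--(iv) reduce to short computations modulo $\Phi(F)$ via the substitution property for $*$ and, where necessary, $\min$-stability.

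For (i), from $a,a*b\in F$ we have $(a,1),(a*b,1)\in\Phi(F)$; substitution in $*$ applied to $(a,1)$ and the reflexive pair $(b,b)$ gives $(a*b,1*b)=(a*b,b)\in\Phi(F)$, so $b\equiv a*b\equiv 1\pmod{\Phi(F)}$ and $b\in F$. Claim (ii) is then immediate from (i) together with Lemma~\ref{lem4}(i): $a\in F$ and $a\leq b$ force $a*b=1\in F$, so (i) forces $b\in F$. Claim (iii) follows from Lemma~\ref{lem4}(iii) and (ii): $y\in F$ gives $y\leq x*y$, and upward closure by (ii) puts $x*y$ in $F$.

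For (iv), both inclusions reduce to the identity $c*c=1$ via iterated substitution. For $(F*(F*c))*c\subseteq F$: given $x,y\in F$, from $(x,1),(y,1),(c,c)\in\Phi(F)$ I would compute $y*c\equiv 1*c=c$, then $x*(y*c)\equiv 1*c=c$, and finally $(x*(y*c))*c\equiv c*c=1$, putting $(x*(y*c))*c$ in $F$. For $c*(F\wedge c)\subseteq F$: taking $y\in F$ with $y$ comparable to $c$ (so that $y\wedge c=\min(y,c)$ is a legitimate infimum), the pairs $(y,1),(c,c)\in\Phi(F)$, together with the comparabilities of $y$ with $c$ and of $1$ with $c$, yield $(y\wedge c,c)\in\Phi(F)$ by $\min$-stability, after which substitution in $*$ gives $c*(y\wedge c)\equiv c*c=1$, so $c*(y\wedge c)\in F$.

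The one delicate point, where I expect the real work to lie, is the interpretation of $F\wedge c$ in the poset setting, since arbitrary infima need not exist in $\mathbf P$. The natural reading, consistent with the $\min$-stability clause of Definition~\ref{def2}, is to range over those $y\in F$ for which $y\wedge c$ exists (in particular whenever $y$ and $c$ are comparable); the congruence argument above then establishes the inclusion exactly in that range, which I expect to match the intended scope of (iv).
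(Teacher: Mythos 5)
Your proof is correct and takes essentially the same route as the paper: both reduce each claim to a computation in $P/\Phi(F)$ via Theorem~\ref{th4} and the identification $F=[1](\Phi(F))$, the only cosmetic difference being that in (iii) the paper computes $a*b\in[a*1](\Phi(F))$ directly rather than going through Lemma~\ref{lem4}(iii) and part (ii). Your caveat about the meaning of $F\wedge c$ in (iv) is, if anything, more careful than the paper, which simply writes $c*(F\wedge c)\subseteq[c*(1\wedge c)](\Phi(F))$ without addressing the existence of the meets or the comparability needed to invoke $\min$-stability.
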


\begin{proof}
We use the fact that the filter $F$ is the $1$-class of the congruence $\Phi(F)$.
\begin{enumerate}[(i)]
\item If $a\in F$, $b\in P$ and $a*b\in F$ then
\[
b=1*b\in[a*b](\Phi(F))=[1](\Phi(F))=F.
\]
\item If $a\in F$, $b\in P$ and $a\leq b$ then $a*b=1\in F$ and hence $b\in F$ by (i).
\item If $a\in P$ and $b\in F$ then $a*b\in[a*1](\Phi(F))=[1](\Phi(F))=F$.
\item
\begin{align*}
c*(F\wedge c) & \subseteq[c*(1\wedge c)](\Phi(F))=[c*c](\Phi(F))=[1](\Phi(F))=F, \\
  (F*(F*c))*c & \subseteq[(1*(1*c))*c](\Phi(F))=[(1*c)*c](\Phi(F))=[c*c](\Phi(F))= \\
	            & =[1](\Phi(F))=F.
\end{align*}
\end{enumerate}
\end{proof}

Theorem~\ref{th7} shows that every filter is a deductive system. However, our concept of a filter is rather complicated and it seems that not all the properties of a filter are necessary to prove this assertion. We can prove

\begin{proposition}\label{prop2}
Let $\mathbf P=(P,\leq,*,1)$ be a strongly sectionally pseudocomplemented poset and $M$ a subset of $P$ containing $1$ and satisfying $(M*(M*x))*x\subseteq M$ for all $x\in P$. Then $M$ is a deductive system of $\mathbf P$.
\end{proposition}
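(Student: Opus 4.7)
The plan is to show directly that $M$ satisfies the modus-ponens closure: assume $a \in M$, $b \in P$ and $a*b \in M$, and derive $b \in M$ by producing $b$ as a specific element of the set $(M*(M*b))*b$, which by hypothesis is contained in $M$.

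The key idea is to choose the two elements of $M$ in the expression $(u*(v*b))*b$ so that the inner compound collapses to $1$, leaving $1*b = b$. Concretely, I would take $u := a*b \in M$ and $v := a \in M$. Then $v*b = a*b$, so $u*(v*b) = (a*b)*(a*b)$, and Lemma~\ref{lem4}(i) gives $(a*b)*(a*b) = 1$. Applying Lemma~\ref{lem4}(ii) yields
\[
(u*(v*b))*b \;=\; 1*b \;=\; b.
\]
Hence $b \in (M*(M*b))*b \subseteq M$ by the hypothesis (used with $x := b$), which is exactly what is required for $M$ to be a deductive system.

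There is no real obstacle here beyond spotting the correct choice of $u$ and $v$; once one realises that the two Lemma~\ref{lem4} identities $x*x = 1$ and $1*x = x$ let the ternary ideal term $(u*(v*x))*x$ evaluate to $x$ as soon as $u = v*x$, the proof is immediate. It is worth noting that only the hypothesis $(M*(M*x))*x \subseteq M$ and the two elementary identities of Lemma~\ref{lem4} are used; neither the min-stability properties from Definition~\ref{def2} nor the remaining clauses of the filter definition enter, which is precisely the point of the proposition in the context following Theorem~\ref{th7}.
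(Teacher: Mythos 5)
Your proof is correct, and it is genuinely more direct than the one in the paper. The paper proceeds in two stages: it first shows that $M$ is upward closed (if $a\in M$ and $a\leq b$ then $b=(a*(1*b))*b\in(M*(M*b))*b\subseteq M$, taking $u=a$, $v=1$), then invokes the strong sectional pseudocomplementation axiom $a\leq(a*b)*b$ to conclude $(a*b)*b\in M$, and only then extracts $b$ via the instance $u=(a*b)*b$, $v=a*b$. Your choice $u=a*b$, $v=a$ collapses the inner term to $(a*b)*(a*b)=1$ in a single step and needs only the identities $c*c=1$ and $1*b=b$ from Lemma~\ref{lem4}(i),(ii). A consequence worth noting is that your argument never uses the inequality $x\leq(x*y)*y$, so it establishes the proposition for every sectionally pseudocomplemented poset with greatest element, not just the strongly sectionally pseudocomplemented ones; the paper's route, by contrast, yields as a by-product that $M$ is an order filter, which is not needed for the stated conclusion.
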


\begin{proof}
Let $a\in N$ and $b\in P$. We have $1\in M$. If $a\leq b$ then
\[
b=1*b=(a*b)*b=(a*(1*b))*b\in(N*(N*b))*b\subseteq N.
\]
Hence, if $a*b\in N$ then because of $a\leq(a*b)*b$ we have $(a*b)*b\in N$ which implies
\[
b=1*b=(((a*b)*b)((a*b)*b))*b\in(N*(N*b))*b\subseteq N.
\]
\end{proof}

Observe that the condition mentioned in Proposition~\ref{prop2} is just the second one of (iv) of Theorem~\ref{th7}.

For the concept of an ideal of a universal algebra which corresponds to our concept of a filter and for the concept of ideal terms the reader is referred to \cite U. In particular, for ideals (alias filters) in permutable and weakly regular varieties see also \cite{CEL} for details.
 
\begin{definition}
An {\em ideal term} for sectionally pseudocomplemented lattices is a term $t(x_1,\ldots,x_n,y_1,\ldots,y_m)$ in the language of sectionally pseudocomplemented lattices satisfying the identity
\[
t(x_1,\ldots,x_n,1,\ldots,1)\approx1.
\]
\end{definition}

Of course, there exists an infinite number of ideal terms in sectionally pseudocomplemented lattices. The following list including five ideal terms is a so-called {\em basis for filters} in sectionally pseudocomplemented lattices, i.e.\ filters can be characterized by this short list of ideal terms.

\begin{lemma}\label{lem2}
The following terms are ideal terms for sectionally pseudocomplemented lattices:
\begin{align*}
                     t_1 & :=1, \\
t_2(x_1,x_2,x_3,y_1,y_2) & :=(((x_1\vee x_2)\wedge(y_1*x_2)\wedge y_2)\vee x_3)*(x_2\vee x_3), \\
t_3(x_1,x_2,x_3,y_1,y_2) & :=(((x_1\vee x_2)\wedge(y_1*x_2)\wedge y_2)\wedge x_3)*(x_2\wedge x_3), \\
t_4(x_1,x_2,x_3,y_1,y_2) & :=(((x_1\vee x_2)\wedge(y_1*x_2)\wedge y_2)*x_3)*(x_2*x_3), \\
t_5(x_1,x_2,x_3,y_1,y_2) & :=(x_3*x_1)*(x_3*(((x_1\vee x_2)\wedge(y_2*x_1)\wedge y_1)).
\end{align*}
\end{lemma}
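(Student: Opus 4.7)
The plan is to verify the defining identity $t(x_1,\ldots,x_n,1,\ldots,1)\approx1$ for each of the five terms separately, using only Lemma~\ref{lem1}(i),(ii) together with the absorption laws $u\wedge(u\vee v)\approx u$ and $u\wedge1\approx u$. The strategy in each case is to first collapse the inner $y$-dependent subterm $(x_1\vee x_2)\wedge(y_1*x_2)\wedge y_2$ (or its variant) to a simple expression in the $x_i$ after setting all $y_j:=1$, and then recognize that the outer pattern has the shape $u*u$, which equals $1$ by Lemma~\ref{lem1}(i).

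Concretely, I would proceed as follows. For $t_1$ the identity is immediate. For $t_2$, $t_3$, $t_4$, setting $y_1=y_2=1$ and applying Lemma~\ref{lem1}(ii) to rewrite $1*x_2$ as $x_2$ reduces the inner expression to $(x_1\vee x_2)\wedge x_2\wedge 1$, which by absorption and $x_2\wedge1=x_2$ simplifies to $x_2$. Substituting back, the three terms become respectively $(x_2\vee x_3)*(x_2\vee x_3)$, $(x_2\wedge x_3)*(x_2\wedge x_3)$ and $(x_2*x_3)*(x_2*x_3)$; each evaluates to $1$ by Lemma~\ref{lem1}(i). For $t_5$, setting $y_1=y_2=1$ and again using $1*x_1=x_1$ reduces the inner subterm to $(x_1\vee x_2)\wedge x_1\wedge 1=x_1$ (absorption with $x_1\leq x_1\vee x_2$), so the term collapses to $(x_3*x_1)*(x_3*x_1)=1$.

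There is no real obstacle here: the verification is purely calculational, and the choice of the five terms has evidently been engineered so that each of the four basic operations on the outer layer yields a self-matching pair $u*u$ after the substitution. The only points requiring care are (a) applying Lemma~\ref{lem1}(ii) at the correct occurrences of $1*(-)$, and (b) invoking absorption with the right orientation, noting in particular that one needs $x_2\leq x_1\vee x_2$ for $t_2$--$t_4$ and $x_1\leq x_1\vee x_2$ for $t_5$. I would lay out the five computations as a short bullet-free display for each $t_i$, chaining equalities justified by (ii), absorption, and finally (i) of Lemma~\ref{lem1}.
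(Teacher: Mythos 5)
Your proposal is correct and follows essentially the same route as the paper: set the $y_j$ to $1$, use Lemma~\ref{lem1}(ii) and absorption to collapse the inner subterm $(x_1\vee x_2)\wedge(1*x_2)\wedge 1$ to $x_2$ (resp.\ $x_1$ for $t_5$), and conclude with $u*u=1$ via Lemma~\ref{lem1}(i). The only cosmetic difference is that the paper factors the computation through the auxiliary term $t(x,y,z,u):=(x\vee y)\wedge(z*y)\wedge u$ and the single identity $t(x,y,1,1)=y$, whereas you carry out the same simplification inline for each $t_i$.
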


\begin{proof}
Put
\[
t(x,y,z,u):=(x\vee y)\wedge(z*y)\wedge u.
\]
Then
\begin{align*}
t_2(x_1,x_2,x_3,y_1,y_2) & =(t(x_1,x_2,y_1,y_2)\vee x_3)*(x_2\vee x_3), \\
t_3(x_1,x_2,x_3,y_1,y_2) & =(t(x_1,x_2,y_1,y_2)\wedge x_3)*(x_2\wedge x_3), \\
t_4(x_1,x_2,x_3,y_1,y_2) & =(t(x_1,x_2,y_1,y_2)*x_3)*(x_2*x_3), \\
t_5(x_1,x_2,x_3,y_1,y_2) & =(x_3*x_1)*(x_3*t(x_2,x_1,y_2,y_1)).
\end{align*}
and according to Lemma~\ref{lem1}
\[
t(x,y,1,1)=(x\vee y)\wedge(1*y)\wedge1=(x\vee y)\wedge y=y
\]
and hence
\begin{align*}
t_2(x_1,x_2,x_3,1,1) & =(t(x_1,x_2,1,1)\vee x_3)*(x_2\vee x_3)=(x_2\vee x_3)*(x_2\vee x_3)=1, \\
t_3(x_1,x_2,x_3,1,1) & =(t(x_1,x_2,1,1)\wedge x_3)*(x_2\wedge x_3)=(x_2\wedge x_3)*(x_2\wedge x_3)=1, \\
t_4(x_1,x_2,x_3,1,1) & =(t(x_1,x_2,1,1)*x_3)*(x_2*x_3)=(x_2*x_3)*(x_2*x_3)=1, \\
t_5(x_1,x_2,x_3,1,1) & =(x_3*x_1)*(x_3*t(x_2,x_1,1,1))=(x_3*x_1)*(x_3*x_1)=1.
\end{align*}
\end{proof}

The closedness with respect to ideal terms was also introduced by A.~Ursini (\cite U).

\begin{definition}
A {\em subset} $A$ of a sectionally pseudocomplemented lattice $\mathbf L=(L,\vee,\wedge,*)$ is said to be {\em closed} with respect to the ideal terms $t_i(x_1,\ldots,x_n,y_1,\ldots,y_m)$, $i\in I$, if for every $i\in I$, all $x_1,\ldots,x_n\in L$ and all $y_1,\ldots,y_m\in A$ we have $t_i(x_1,\ldots,x_n,y_1,\ldots,y_m)\in A$.
\end{definition}

Now we prove that the ideal terms listed in Lemma~\ref{lem2} form a basis for filters, i.e.\ filters are characterized as those subsets which are closed with respect to these ideal terms.

\begin{theorem}\label{th6}
Let $\mathbf L=(L,\vee,\wedge,*)$ be a sectionally pseudocomplemented lattice and $F\subseteq L$. Then $F\in\Fil\mathbf L$ if and only if $F$ is closed with respect to the ideal terms $t_1,\ldots,t_5$ listed in Lemma~\ref{lem2}.
\end{theorem}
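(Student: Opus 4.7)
The plan is to prove both directions via the congruence--filter correspondence established in Theorems \ref{th1} and \ref{th2}.

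For the implication ``filter $\Rightarrow$ closed'', I would observe that this is actually the universal half: if $F\in\Fil\mathbf L$ then $F$ is closed with respect to \emph{every} ideal term, not just $t_1,\ldots,t_5$. Indeed, by Theorem \ref{th2} the relation $\Theta:=\Phi(F)$ is a congruence on $\mathbf L$ with $[1]\Theta=F$. Hence for any ideal term $t(x_1,\ldots,x_n,y_1,\ldots,y_m)$ and arbitrary $x_1,\ldots,x_n\in L$, $y_1,\ldots,y_m\in F$, each $y_j$ is $\Theta$-related to $1$, so the substitution property yields
\[
t(x_1,\ldots,x_n,y_1,\ldots,y_m)\mathrel\Theta t(x_1,\ldots,x_n,1,\ldots,1)=1,
\]
which places $t(x_1,\ldots,x_n,y_1,\ldots,y_m)$ in $[1]\Theta=F$. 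The case $t=t_1,\ldots,t_5$ is then immediate.

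For the converse ``closed $\Rightarrow$ filter'' I would directly verify the conditions of Definition \ref{def1}. Membership of $1$ comes from $t_1=1$. For the four closure conditions, the strategy is to substitute the right values into $t_2,\ldots,t_5$ so that each ideal term collapses to exactly the expression we need to land in $F$. The auxiliary subterm introduced in the proof of Lemma \ref{lem2},
\[
t(x,y,z,u):=(x\vee y)\wedge(z*y)\wedge u,
\]
is the key: the substitution $z:=x*y$, $u:=y*x$ makes it collapse to $x$. Namely, by Lemma \ref{lem1}(iv) and the observation $b\leq(a*b)*b$ we have $x\vee y\leq(x*y)*y$, so $(x\vee y)\wedge((x*y)*y)=x\vee y$; then Lemma \ref{lem1}(vi) applied as $(y\vee x)\wedge(y*x)=x$ finishes the collapse. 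Symmetrically, the substitution with the roles of the two $y$-variables swapped (which is exactly how $t_5$ is written) collapses to $y$.

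Armed with this collapsing identity, plugging $(x_1,x_2,x_3,y_1,y_2):=(x,y,z,x*y,y*x)$ into $t_2,t_3,t_4$ yields $(x\vee z)*(y\vee z)$, $(x\wedge z)*(y\wedge z)$, $(x*z)*(y*z)$ respectively, and the same substitution in $t_5$ yields $(z*x)*(z*y)$. Thus, assuming $x*y,y*x\in F$, closedness under $t_2,\ldots,t_5$ forces all four filter-defining elements into $F$.

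The main obstacle is the converse direction, and specifically spotting the substitution $(y_1,y_2):=(x*y,y*x)$ that makes the bracketed inner subterm collapse to a single variable. Once this collapse is observed (which in turn rests on the key inequalities $x\vee y\leq(x*y)*y$ and $(x\vee y)\wedge(y*x)=x$ from Lemma \ref{lem1}), the rest is bookkeeping; no further use of deeper structure is needed.
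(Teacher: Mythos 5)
Your proposal is correct and follows essentially the same route as the paper: the forward direction uses Theorem~\ref{th2} to realize $F$ as $[1](\Phi(F))$ and the substitution property of the congruence $\Phi(F)$ applied to the ideal terms, and the converse rests on the same collapsing identity $t(x,y,x*y,y*x)=x$ for the auxiliary term $t(x,y,z,u)=(x\vee y)\wedge(z*y)\wedge u$, verified from Lemma~\ref{lem1}(iv) and (vi), followed by the same substitutions into $t_2,\ldots,t_5$. The only cosmetic difference is that you absorb $(x*y)*y$ via $x\vee y\leq(x*y)*y$ before applying (vi), while the paper regroups and applies (vi) first; the ingredients are identical.
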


\begin{proof}
If $F\in\Fil\mathbf L$ then $F=[1](\Phi(F))$ according to Theorem~\ref{th2}, and if
\[
t_i(x_1,\ldots,x_n,y_1,\ldots,y_m),\quad i\in\{1,\ldots,5\},
\]
are the ideal terms listed in Lemma~\ref{lem2}, $a_1,\ldots,a_n\in L$ and $b_1,\ldots,b_m\in F$ then
\[
t_i(a_1,\ldots,a_n,b_1,\ldots,b_m)\in[t_i(a_1,\ldots,a_n,1,\ldots,1)](\Phi(F))=[1](\Phi(F))=F
\]
according to Lemma~\ref{lem2} and hence $F$ is closed with respect to the ideal terms $t_1,\ldots,t_5$. Conversely, assume $F$ to be closed with respect to the ideal terms $t_1,\ldots,t_5$. Then $1=t_1\in F$. Now assume $a,b\in L$ and $a*b,b*a\in F$. For the term
\[
t(x,y,z,u):=(x\vee y)\wedge(z*y)\wedge u
\]
we have
\begin{align*}
t_2(x_1,x_2,x_3,y_1,y_2) & =(t(x_1,x_2,y_1,y_2)\vee x_3)*(x_2\vee x_3), \\
t_3(x_1,x_2,x_3,y_1,y_2) & =(t(x_1,x_2,y_1,y_2)\wedge x_3)*(x_2\wedge x_3), \\
t_4(x_1,x_2,x_3,y_1,y_2) & =(t(x_1,x_2,y_1,y_2)*x_3)*(x_2*x_3), \\
t_5(x_1,x_2,x_3,y_1,y_2) & =(x_3*x_1)*(x_3*t(x_2,x_1,y_2,y_1))
\end{align*}
and according to Lemma~\ref{lem1} (iv) and (vi) we obtain
\begin{align*}
t(x,y,x*y,y*x) & =(x\vee y)\wedge((x*y)*y)\wedge(y*x)= \\
               & =((y\vee x)\wedge(y*x))\wedge((x*y)*y)=x\wedge((x*y)*y)=x.
\end{align*}
Hence
\begin{align*}
    (a\vee c)*(b\vee c) & =(t(a,b,a*b,b*a)\vee c)*(b\vee c)=t_2(a,b,c,a*b,b*a)\in F, \\
(a\wedge c)*(b\wedge c) & =(t(a,b,a*b,b*a)\wedge c)*(b\wedge c)=t_  3(a,b,c,a*b,b*a)\in F, \\
            (a*c)*(b*c) & =(t(a,b,a*b,b*a)*c)*(b*c)=t_4(a,b,c,a*b,b*a)\in F, \\
            (c*a)*(c*b) & =(c*a)*(c*t(b,a,b*a,a*b))=t_5(a,b,c,a*b,b*a)\in F
\end{align*}
showing $F\in\Fil\mathbf L$.
\end{proof}

\begin{remark}
Let us note that the term $t$ from the proof of Theorem~\ref{th6} gives rise to a Maltsev term. Namely, if
\begin{align*}
t(x,y,z,u) & :=(x\vee y)\wedge(z*y)\wedge u\text{ and} \\
  q(x,y,z) & :=t(x,z,x*y,y*x).
\end{align*}
then
\begin{align*}
q(x,y,z) & =(x\vee z)\wedge((x*y)*z)\wedge(y*x), \\
q(x,x,z) & =(x\vee z)\wedge((x*x)*z)\wedge(x*x)=(x\vee z)\wedge(1*z)\wedge1=(x\vee z)\wedge z=z, \\
q(x,z,z) & =(x\vee z)\wedge((x*z)*z)\wedge(z*x)=((z\vee x)\wedge(z*x))\wedge((x*z)*z)= \\
         & =x\wedge((x*z)*z)=x.
\end{align*}
Observe that the Maltsev term $q(x,y,z)$ is different from that in Proposition~\ref{prop1}.
\end{remark}

In the following we write $a\wedge b\wedge c$ instead of $\inf(a,b,c)$.

Now we introduce a certain modification of the notion an ideal term (for posets) which need not be defined everywhere. This will be used in the sequel.

\begin{definition}
A {\em partial ideal term} for sectionally pseudocomplemented posets with greatest element $1$ is a partially defined term $T(x_1,\ldots,x_n,y_1,\ldots,y_m)$ in the language of sectionally pseudocomplemented posets with greatest element $1$ satisfying the identity
\[
T(x_1,\ldots,x_n,1,\ldots,1)\approx1.
\]
This language contains also a binary operator $U(x,y)$.
\end{definition}

Using of the concept of partial ideal terms, we will try to describe filters also in strongly sectionally pseudocomplemented posets. Similarly as in Lemma~\ref{lem2} we firstly get a list of four partial ideal terms which will be shown to suffice.

\begin{lemma}\label{lem3}
The following partial terms are partial ideal terms for strongly sectionally pseudocomplemented posets:
\begin{align*}
                                 T_1 & :=1, \\
            T_2(x_1,x_2,x_3,y_1,y_2) & :=((U(x_1,x_2)\wedge(y_1*x_2)\wedge y_2)*x_3)*(x_2*x_3), \\
            T_3(x_1,x_2,x_3,y_1,y_2) & :=(x_3*x_1)*(x_3*(U(x_1,x_2)\wedge(y_2*x_1)\wedge y_1)), \\
T_4(x_1,x_2,x_3,x_4,y_1,y_2,y_3,y_4) & :=((U(x_1,x_2)\wedge(y_1*x_2)\wedge y_2)\wedge \\
                                     & \hspace*{6.75mm} \wedge(U(x_3,x_4)\wedge(y_3*x_4)\wedge y_4))*(x_2\wedge x_4).
\end{align*}
\end{lemma}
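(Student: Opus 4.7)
The plan is to verify, for each of the four listed partial terms, the defining identity $T_i(x_1,\ldots,x_n,1,\ldots,1)\approx 1$ (holding wherever the partial term is defined), using the elementary laws of Lemma~\ref{lem4} together with one short order-theoretic observation about the hybrid meet $U(\cdot,\cdot)\wedge\cdot\wedge\cdot$.

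The observation is this: for any $a,b\in P$ one has $U(a,b)\wedge b\wedge 1 = b$, and symmetrically $U(a,b)\wedge a\wedge 1=a$. Translating via the convention $a\wedge b\wedge c=\inf(a,b,c)$ stated just before the definition, the first amounts to $L(U(a,b),b,1)=L(b)$. The inclusion $L(b)\subseteq L(U(a,b),b,1)$ holds because any $w\leq b$ is automatically below every upper bound of $\{a,b\}$ and below $1$, while the reverse inclusion is immediate from the clause $w\leq b$. Hence $b$ is the required greatest lower bound, so the infimum exists and equals $b$.

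Armed with this together with $1*x=x$ and $x*x=1$ (Lemma~\ref{lem4}(ii),(i)), the verifications are mechanical. $T_1\equiv 1$ is trivial. For $T_2$, substituting $y_1=y_2=1$ and applying the observation collapses $U(x_1,x_2)\wedge(1*x_2)\wedge 1$ to $x_2$, so $T_2(x_1,x_2,x_3,1,1)=(x_2*x_3)*(x_2*x_3)=1$. For $T_3$ the same substitution reduces $U(x_1,x_2)\wedge(1*x_1)\wedge 1$ to $x_1$, yielding $(x_3*x_1)*(x_3*x_1)=1$. For $T_4$ both bracketed inner subterms simplify to $x_2$ and $x_4$ respectively, leaving $(x_2\wedge x_4)*(x_2\wedge x_4)=1$; this is defined precisely when $x_2\wedge x_4$ exists, which is exactly the built-in partiality clause for $T_4$.

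There is no substantive obstacle: the proof is almost entirely formal substitution. The only point not quoted verbatim from Lemma~\ref{lem4} is the preliminary identity $U(a,b)\wedge b=b$, which is nevertheless a one-line consequence of the definitions of $U$ and $L$.
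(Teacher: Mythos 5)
Your proof is correct and follows essentially the same route as the paper's: both reduce each $T_i$ by the observation that $U(a,b)\wedge(1*b)\wedge 1=b$ (the paper packages this as $T(x,y,1,1)=y$ for the auxiliary partial term $T(x,y,z,u):=U(x,y)\wedge(z*y)\wedge u$) and then finish with $c*c=1$ from Lemma~\ref{lem4}(i). Your explicit one-line verification that $L(U(a,b),b,1)=L(b)$ and your remark on exactly where $T_4(x_1,x_2,x_3,x_4,1,1,1,1)$ is defined are harmless additions, and your final line $(x_2\wedge x_4)*(x_2\wedge x_4)=1$ is in fact the intended reading of the paper's computation.
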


\begin{proof}
Put
\[
T(x,y,z,u):=U(x,y)\wedge(z*y)\wedge u.
\]
Then
\begin{align*}
            T_2(x_1,x_2,x_3,y_1,y_2) & =(T(x_1,x_2,y_1,y_2)*x_3)*(x_2*x_3), \\
            T_3(x_1,x_2,x_3,y_1,y_2) & =(x_3*x_1)*(x_3*T(x_2,x_1,y_2,y_1)), \\
T_4(x_1,x_2,x_3,x_4,y_1,y_2,y_3,y_4) & =(T(x_1,x_2,y_1,y_2)\wedge T(x_3,x_4,y_3,y_4))*(x_2\wedge x_4)
\end{align*}
and according to Lemma~\ref{lem4} and Remark~\ref{rem1}
\[
T(x,y,1,1)=U(x,y)\wedge(1*y)\wedge1=U(x,y)\wedge y=y.
\]
Hence
\begin{align*}
        T_2(x_1,x_2,x_3,1,1) & =(T(x_1,x_2,1,1)*x_3)*(x_2*x_3)=(x_2*x_3)*(x_2*x_3)=1, \\
        T_3(x_1,x_2,x_3,1,1) & =(x_3*x_1)*(x_3*T(x_2,x_1,1,1))=(x_3*x_1)*(x_3*x_1)=1, \\
T_4(x_1,x_2,x_3,x_4,1,1,1,1) & =(T(x_1,x_2,1,1)\wedge T(x_3,x_4,1,1))*(x_2\wedge x_4)= \\
                             & =(x_2*x_4)*(x_2*x_4)=1.
\end{align*}
\end{proof}

Now we define closedness with respect to partial ideal terms.

\begin{definition}
A {\em subset} $A$ of a strongly sectionally pseudocomplemented poset $\mathbf P=(P,\leq,*,1)$ is said to be {\em closed} with respect to the partial ideal terms $T_i(x_1,\ldots,x_n,y_1,\ldots$ $\ldots,y_m)$, $i\in I$, if for every $i\in I$, all $x_1,\ldots,x_n\in P$ and all $y_1,\ldots,y_m\in A$ we have that $T_i(x_1,\ldots,x_n,y_1,\ldots,y_m)$ is defined and $T_i(x_1,\ldots,x_n,y_1,\ldots,y_m)\in A$.
\end{definition}

Although our ideal terms are only partial, we can prove that every subset of a strongly sectionally pseudocomplemented poset $\mathbf P$ closed with respect to them is really a filter of $\mathbf P$.

\begin{theorem}
Let $\mathbf P=(P,\leq,*,1)$ be a strongly sectionally pseudocomplemented poset and $F$ a subset of $P$ that is closed with respect to the partial ideal terms $T_1,\ldots,T_4$ listed in Lemma~\ref{lem3}. Then $F\in\Fil\mathbf P$.
\end{theorem}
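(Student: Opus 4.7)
The plan is to verify each clause of Definition~\ref{def2} for $F$, proceeding in exact analogy with the proof of Theorem~\ref{th6}. The base condition $1\in F$ is immediate from closedness under $T_1$. The remaining clauses rest on the auxiliary partial term
\[
T(x,y,z,u):=U(x,y)\wedge(z*y)\wedge u
\]
and on the key identity $T(a,b,a*b,b*a)=a$ for all $a,b\in P$. To establish this, apply Remark~\ref{rem1} with the roles of $a$ and $b$ swapped to obtain $U(a,b)\wedge(b*a)=a$; then, since $\mathbf P$ is strongly sectionally pseudocomplemented, $a\leq(a*b)*b$, so adjoining $(a*b)*b$ to that infimum leaves it unchanged. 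Hence $T(a,b,a*b,b*a)$ is defined and equals $a$, and symmetrically $T(b,a,b*a,a*b)=b$.

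Given this identity, for any $a,b,c\in P$ with $a*b,b*a\in F$, direct substitution into the explicit expressions of Lemma~\ref{lem3} yields
\[
T_2(a,b,c,a*b,b*a)=(a*c)*(b*c)\quad\text{and}\quad T_3(a,b,c,a*b,b*a)=(c*a)*(c*b),
\]
both being defined elements of $P$. Closedness of $F$ under $T_2$ and $T_3$ therefore gives the first clause of Definition~\ref{def2}. For the second clause, assume $a*b,b*a,c*d,d*c\in F$ with $a,c$ comparable and $b,d$ comparable, so that the meets $a\wedge c=\min(a,c)$ and $b\wedge d=\min(b,d)$ both exist. Then
\[
T_4(a,b,c,d,a*b,b*a,c*d,d*c)=(a\wedge c)*(b\wedge d)=\min(a,c)*\min(b,d),
\]
and closedness under $T_4$ yields the required membership.

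The delicate point, and the reason this partial-term machinery is nontrivial in the poset setting, is that at every step one must check that the chosen arguments lie inside the domain of definition of the partial terms. This is guaranteed by Remark~\ref{rem1} (ensuring $U(a,b)\wedge(b*a)$ exists), by strong sectional pseudocomplementation (forcing $a\leq(a*b)*b$, so that the further infimum with $(a*b)*b$ is unaffected), and by the explicit comparability hypotheses in the second clause of Definition~\ref{def2} (ensuring that $a\wedge c$ and $b\wedge d$ exist). With definedness secured, the three clauses of the filter definition are exhausted, and hence $F\in\Fil\mathbf P$.
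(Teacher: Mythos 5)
Your proof is correct and follows essentially the same route as the paper's: both establish the key identity $T(a,b,a*b,b*a)=a$ via Remark~\ref{rem1} and strong sectional pseudocomplementation, and then read off the filter conditions by substituting into $T_2$, $T_3$ and $T_4$. Your additional attention to the definedness of the partial terms is a welcome (if implicit in the paper) clarification, but does not change the argument.
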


\begin{proof}
We have $1=T_1\in F$. Now assume $a,b,c,d\in P$ and $a*b,b*a,c*d,d*c\in F$. For the partial term
\[
T(x,y,z,u):=U(x,y)\wedge(z*y)\wedge u
\]
we have
\begin{align*}
            T_2(x_1,x_2,x_3,y_1,y_2) & =(T(x_1,x_2,y_1,y_2)*x_3)*(x_2*x_3), \\
            T_3(x_1,x_2,x_3,y_1,y_2) & =(x_3*x_1)*(x_3*T(x_2,x_1,y_2,y_1)), \\
T_4(x_1,x_2,x_3,x_4,y_1,y_2,y_3,y_4) & =(T(x_1,x_2,y_1,y_2)\wedge T(x_3,x_4,y_3,y_4))*(x_2\wedge x_4)
\end{align*}
and according to Lemma~\ref{lem4} and Remark~\ref{rem1} we obtain
\begin{align*}
T(x,y,x*y,y*x) & =U(x,y)\wedge((x*y)*y)\wedge(y*x)= \\
               & =(U(y,x)\wedge(y*x))\wedge((x*y)*y)=x\wedge((x*y)*y)=x.
\end{align*}
Hence
\begin{align*}
            (a*c)*(b*c) & =(T(a,b,a*b,b*a)*c)*(b*c)=T_2(a,b,c,a*b,b*a)\in F, \\
            (c*a)*(c*b) & =(c*a)*(c*T(b,a,b*a,a*b))=T_3(a,b,c,a*b,b*a)\in F.
\end{align*}
Moreover, if $a$ and $c$ are comparable and $b$ and $d$ are comparable then we apply the partial term $T_4$ to derive
\begin{align*}
\min(a,c)*\min(b,d) & =(T(a,b,a*b,b*a)\wedge T(c,d,c*d,d*c))*(b\wedge d)= \\
                    & =T_4(a,b,c,d,a*b,b*a,c*d,d*c)\in F.
\end{align*}
This shows $F\in\Fil\mathbf P$.
\end{proof}

\begin{remark}
Let us consider the partial term $T(x,y,z,u):=U(x,y)\wedge(z*y)\wedge u$ from the proof of Lemma~\ref{lem3} and put
\[
Q(x,y,z):=T(x,z,x*y,y*x),
\]
i.e.
\[
Q(x,y,z)=U(x,z)\wedge((x*y)*z)\wedge(y*x).
\]
Of course, this is only a partial term because the infimum in $Q$ need not exists for some elements from a strongly sectionally pseudocomplemented poset $\mathbf P=(P,\leq,*,1)$. It is of some interest that this partial term behaves like a Maltsev term. Namely, we can easily compute
\begin{align*}
Q(x,x,z) & =U(x,z)\wedge((x*x)*z)\wedge(x*x)=U(x,z)\wedge(1*z)\wedge1=U(x,y)\wedge z=z, \\
Q(x,z,z) & =U(x,z)\wedge((x*z)*z)\wedge(z*x)=(U(z,x)\wedge(z*x))\wedge((x*z)*z)= \\
         & =x\wedge((x*z)*z)=x.
\end{align*}
Moreover, these expressions $Q(x,x,z)$ and $Q(x,z,z)$ are defined for all $x,z\in P$.
\end{remark}

For every sectionally pseudocomplemented lattice $\mathbf L=(L,\vee,\wedge,*)$ and every $M\subseteq L$ let $F(M)$ denote the filter of $\mathbf L$ generated by $M$.

The connection between filters generated by a certain subset and congruences on sectionally pseudocomplemenetd lattices is described in the following proposition.

\begin{proposition}
Let $\mathbf L=(L,\vee,\wedge,*)$ be a sectionally pseudocomplemented lattice, $M\subseteq L$ and $a\in L$. Then
\begin{align*}
               \Phi(F(M)) & =\Theta(M\times\{1\}), \\
[1](\Theta(M\times\{1\})) & =F(M).
\end{align*}
In particular,
\begin{align*}
      \Phi(F(a)) & =\Theta(a,1), \\
[1](\Theta(a,1)) & =F(a).
\end{align*}
\end{proposition}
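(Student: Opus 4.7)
The plan is to exploit the lattice isomorphism $\Phi \colon \Fil\mathbf L \to \Con\mathbf L$ (with inverse $\Theta\mapsto[1]\Theta$) established in Corollary~\ref{cor1}. Since both maps are inclusion-preserving, they should transport generated filters to generated congruences and vice versa, and this is essentially what the proposition asserts. The second equation is almost immediate from the first: applying $[1](\cdot)$ to both sides of $\Phi(F(M))=\Theta(M\times\{1\})$ and using $[1](\Phi(F))=F$ from Theorem~\ref{th2} gives $[1](\Theta(M\times\{1\}))=F(M)$.

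For the first equation I would prove the two inclusions separately. First, I observe that $\Phi(F(M))$ is a congruence by Theorem~\ref{th2}, and it contains every pair $(m,1)$ with $m\in M$: indeed, by Lemma~\ref{lem1}(i) we have $m*1=1\in F(M)$, and by Lemma~\ref{lem1}(ii) we have $1*m=m\in F(M)$ (since $M\subseteq F(M)$). Hence $M\times\{1\}\subseteq\Phi(F(M))$, and therefore $\Theta(M\times\{1\})\subseteq\Phi(F(M))$.

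For the reverse inclusion, let $\Theta$ be any congruence on $\mathbf L$ with $M\times\{1\}\subseteq\Theta$. Then $M\subseteq[1]\Theta$, and by Theorem~\ref{th1} the class $[1]\Theta$ is a filter of $\mathbf L$; since $F(M)$ is the smallest filter containing $M$, we get $F(M)\subseteq[1]\Theta$. Applying the monotone map $\Phi$ and using $\Phi([1]\Theta)=\Theta$ (again by Theorem~\ref{th1}) yields $\Phi(F(M))\subseteq\Theta$. Since this holds for every congruence containing $M\times\{1\}$, taking the intersection gives $\Phi(F(M))\subseteq\Theta(M\times\{1\})$. Combining the two inclusions proves the first equation, and the "in particular" case just specializes to $M=\{a\}$.

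I do not anticipate a serious obstacle: the argument is a routine transport along the isomorphism of Corollary~\ref{cor1}, and the only points that require a moment's care are verifying $M\times\{1\}\subseteq\Phi(F(M))$ (using parts (i) and (ii) of Lemma~\ref{lem1}) and explicitly invoking that $\Theta(M\times\{1\})$ and $F(M)$ are defined as intersections of the respective classes of congruences and filters, so that the universal property used in the second inclusion applies.
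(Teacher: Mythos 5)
Your proof is correct and follows essentially the same route as the paper: both establish $\Theta(M\times\{1\})\subseteq\Phi(F(M))$ from $M\times\{1\}\subseteq\Phi(F(M))$, and the reverse inclusion via $F(M)\subseteq[1](\Theta(M\times\{1\}))$ using that congruence kernels are filters, then transport everything along the isomorphism of Corollary~\ref{cor1}. Your explicit verification that $M\times\{1\}\subseteq\Phi(F(M))$ via Lemma~\ref{lem1}(i),(ii) is a small detail the paper leaves implicit, but the argument is otherwise the same.
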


\begin{proof}
Since $M\times\{1\}\subseteq\Phi(F(M))$ we have
\[
\Theta(M\times\{1\})\subseteq\Phi(F(M))
\]
and hence
\[
[1](\Theta(M\times\{1\}))\subseteq[1](\Phi(F(M)))=F(M)
\]
according to Corollary~\ref{cor1}. Because of $M\subseteq[1](\Theta(M\times\{1\}))$ we have
\[
F(M)\subseteq[1](\Theta(M\times\{1\}))
\]
and hence
\[
\Phi(F(M))\subseteq\Phi([1](\Theta(M\times\{1\})))=\Theta(M\times\{1\})
\]
according to Corollary~\ref{cor1}.
\end{proof}

An analogous result holds for strongly sectionally pseudocomplemented posets.

Authors' addresses:

Ivan Chajda \\
Palack\'y University Olomouc \\
Faculty of Science \\
Department of Algebra and Geometry \\
17.\ listopadu 12 \\
771 46 Olomouc \\
Czech Republic \\
ivan.chajda@upol.cz

Helmut L\"anger \\
TU Wien \\
Faculty of Mathematics and Geoinformation \\
Institute of Discrete Mathematics and Geometry \\
Wiedner Hauptstra\ss e 8-10 \\
1040 Vienna \\
Austria, and \\
Palack\'y University Olomouc \\
Faculty of Science \\
Department of Algebra and Geometry \\
17.\ listopadu 12 \\
771 46 Olomouc \\
Czech Republic \\
helmut.laenger@tuwien.ac.at
\end{document}